\DeclareMathOperator{\Ran}{Ran}
\DeclareMathOperator{\Ker}{Ker}
\DeclareMathOperator{\Span}{span}
\newcommand{\comp}{\text{\rm comp}}
\newcommand{\ac}{\text{\rm ac}}
\newcommand{\ess}{\text{\rm ess}}
\renewcommand\Re{\hbox{{\rm Re}}\,}
\newcommand{\abs}[1]{\lvert#1\rvert}
\newcommand{\Abs}[1]{\left\lvert#1\right\rvert}
\newcommand{\norm}[1]{\lVert#1\rVert}
\newcommand{\bbR}{{\mathbb R}}
\newcommand{\bbC}{{\mathbb C}}
\newcommand{\bbN}{{\mathbb N}}
\newcommand{\bbZ}{{\mathbb Z}}
\newcommand{\calL}{\mathcal{L}}
\newcommand{\calM}{\mathcal{M}}
\newcommand{\calN}{\mathcal{N}}
\numberwithin{equation}{section}
\theoremstyle{plain}
\newtheorem{theorem}{\bf Theorem}[section]
\newtheorem*{theoremA}{Theorem A}
\newtheorem{lemma}[theorem]{\bf Lemma}
\newtheorem{proposition}[theorem]{\bf Proposition}
\theoremstyle{definition}
\theoremstyle{remark}
\newtheorem*{remark*}{\bf Remark}
\newcommand{\wt}{\widetilde}
\newcommand{\eps}{\varepsilon}
\newcommand{\loc}{\mathrm{loc}}
\renewcommand{\[}{\begin{equation}}
\renewcommand{\]}{\end{equation}}
\begin{document} 
\title{On the spectrum of the multiplicative Hilbert matrix}
\date{\today} 

\author{Karl-Mikael Perfekt}
\address{Department of Mathematics and Statistics, 
  University of Reading, Reading RG6 6AX, United Kingdom}
\email{k.perfekt@reading.ac.uk}

\author{Alexander Pushnitski} 
\address{Department of Mathematics,
King's College London,
Strand, London WC2R 2LS,
United Kingdom}
\email{alexander.pushnitski@kcl.ac.uk}

\subjclass[2010]{47B32,47B35}

\keywords{Multiplicative Hilbert matrix, Helson matrix, absolutely continuous spectrum}

\begin{abstract}
We study the multiplicative Hilbert matrix, i.e. the infinite matrix with entries
$(\sqrt{mn}\log(mn))^{-1}$ for $m,n\geq2$. 
This matrix was recently introduced within the context of the theory of 
Dirichlet series, and it was shown that the multiplicative Hilbert matrix
has no eigenvalues and that its continuous spectrum coincides with $[0,\pi]$. 
Here we prove that the multiplicative Hilbert matrix has no singular continuous 
spectrum and that its absolutely continuous spectrum has multiplicity one. 
Our argument relies on the tools of spectral perturbation theory and scattering theory.
Finding an explicit diagonalisation of the multiplicative Hilbert matrix remains an interesting
open problem.  
\end{abstract}

\maketitle

\section{Introduction}\label{sec.a}

Let $\{h(n)\}_{n=0}^\infty$ be a sequence of complex numbers. 
A \emph{Hankel matrix} is an infinite matrix of the form 
$$
\{h(n+m)\}_{n,m=0}^\infty,
$$
considered as a linear operator on $\ell^2(\bbZ_+)$, $\bbZ_+=\{0,1,2,\dots\}$. 
One of the key examples of Hankel matrices is the \emph{Hilbert matrix}: 
\[
\{(n+m+a)^{-1}\}_{n,m=0}^\infty,\quad a>0.
\label{a1}
\]
Magnus \cite{Magnus} studied the spectrum of the Hilbert matrix for $a=1$, proving that it is given by the interval $[0,\pi]$ and that it is purely continuous, i.e. there are no eigenvalues.  
Later, Rosenblum \cite{Rosenblum} proved the following theorem.
\begin{theoremA}
For any $a>0$, the absolutely continuous (a.c.) spectrum of the 
Hilbert matrix \eqref{a1} is $[0,\pi]$ with multiplicity one. 
There is no singular continuous spectrum. 
\begin{enumerate}[\rm (i)]
\item
If $a\geq1/2$, there are no eigenvalues. 
\item
If $0<a<1/2$, there is one simple eigenvalue $\pi/\cos(\pi(a-1/2))$, and no other eigenvalues. 
\end{enumerate}
\end{theoremA}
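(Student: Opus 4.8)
The plan is to reduce the Hilbert matrix to an explicit integral operator on the half-line and to diagonalise the latter. Writing $H_a=\{(n+m+a)^{-1}\}$ and using $(n+m+a)^{-1}=\int_0^1 t^{n+m+a-1}\,dt$, I would factor $H_a=J^*J$, where $J\colon \ell^2(\bbZ_+)\to L^2((0,1),t^{a-1}dt)$ sends $u=\{u_n\}$ to the Hardy function $(Ju)(t)=\sum_n u_n t^n$. Since $H_a=J^*J$ and $JJ^*$ share the same nonzero spectrum, it suffices to analyse $JJ^*$, which is the integral operator on $L^2((0,1),t^{a-1}dt)$ with the Szeg\H{o}-type kernel $(1-tw)^{-1}$. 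Substituting $t=e^{-x}$, $w=e^{-y}$ and passing to $L^2(\bbR_+,dx)$ turns this into the Hankel integral operator $\bGank_a$ with kernel
\begin{equation}
k_a(x+y)=\frac{e^{(1-a)(x+y)/2}}{2\sinh\big((x+y)/2\big)},\qquad k_a(\sigma)\sim \sigma^{-1}\ (\sigma\to0^+),\quad k_a(\sigma)=O(e^{-a\sigma/2})\ (\sigma\to\infty).
\end{equation}
The singularity $k_a(\sigma)\sim\sigma^{-1}$ is precisely that of the Carleman kernel $(x+y)^{-1}$, which pushes the continuous spectrum up to $\pi$, while the exponential decay makes the remainder a gentle (trace-class) correction.

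The heart of the matter is the absolutely continuous spectrum, its multiplicity, and the absence of singular continuous spectrum, and I expect this to be the main obstacle. The delicate point is that the purely homogeneous Carleman operator $(x+y)^{-1}$ has a.c. spectrum $[0,\pi]$ of multiplicity \emph{two}: its dilation invariance lets the Mellin transform diagonalise it into multiplication by the even symbol $\pi/\cosh(\pi\xi)$, so each value in $(0,\pi)$ is attained twice. Since $H_a$ has multiplicity \emph{one}, one cannot simply treat the non-homogeneous part as a compact perturbation, because a trace-class perturbation preserves the a.c. multiplicity. The mechanism lowering the multiplicity is the breaking of dilation symmetry by $k_a$. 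Concretely, I would apply the Mellin transform $\mathcal M\colon L^2(\bbR_+)\to L^2(\bbR)$, under which $\bGank_a$ becomes the integral operator with kernel factoring as $\Phi(\eta-\xi)\,B(\tfrac12-i\xi,\tfrac12+i\eta)$, where $\Phi$ is the Mellin transform of $k_a$ and $B$ is the Beta function; the Beta factor couples $\xi$ with $-\xi$ and is responsible for collapsing the two sheets into one. A cleaner alternative is Rosenblum's observation that $H_a$ commutes with an explicit self-adjoint second-order operator whose spectral resolution is carried by the conical Legendre functions $P_{-1/2+i\xi}$; simultaneous diagonalisation then represents $H_a$ as multiplication by a scalar symbol on a multiplicity-one spectral representation, at once giving a.c. spectrum $[0,\pi]$ of multiplicity one and excluding singular continuous spectrum.

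Finally the eigenvalues. Since $H_a=J^*J\ge0$ with $J$ injective, and since the limiting-absorption bounds from the a.c. analysis rule out eigenvalues embedded in $[0,\pi]$, any eigenvalue must satisfy $\lambda>\pi$. Here the computation is exact and clean: for $u_n=\Gamma(n+a)/\Gamma(n+1)$ one has $(Ju)(w)=\sum_n u_n w^n=\Gamma(a)(1-w)^{-a}$, whence
\begin{equation}
(H_a u)_n=\Gamma(a)\int_0^1 w^{n+a-1}(1-w)^{-a}\,dw=\Gamma(a)\,B(n+a,1-a)=\Gamma(a)\Gamma(1-a)\,u_n=\frac{\pi}{\sin(\pi a)}\,u_n,
\end{equation}
using the reflection formula $\Gamma(a)\Gamma(1-a)=\pi/\sin(\pi a)=\pi/\cos(\pi(a-1/2))$. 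Stirling gives $u_n\asymp n^{a-1}$, so $u\in\ell^2(\bbZ_+)$ iff $2(a-1)<-1$, i.e.\ iff $a<1/2$; at $a=1/2$ the candidate is a threshold resonance rather than a bound state, and for $a>1/2$ it is not square-summable, so no eigenvalue survives. Uniqueness --- that there is no second eigenvalue and none in $[0,\pi]$ --- would follow from the Birman--Krein bound on the number of eigenvalues produced above $\pi$ by the trace-class tail, together with a monotonicity argument for the associated Fredholm determinant, completing the proof.
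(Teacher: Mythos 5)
First, a point of orientation: the paper does not prove Theorem A at all --- it quotes it from Rosenblum, whose explicit diagonalisation of the Hilbert matrix is the source of every assertion in the statement. So your proposal is not competing with an argument in the text and must stand on its own. Its sound parts are genuinely good: the factorisation $H_a=J^*J$ and the passage to the Hankel integral operator with kernel $k_a(\sigma)=e^{(1-a)\sigma/2}/(2\sinh(\sigma/2))$ is correct and is exactly the kind of reduction the paper performs for $M(g_a)$ in Theorem~\ref{thm.a4}; and the eigenvector computation $u_n=\Gamma(n+a)/\Gamma(n+1)$, $(H_au)_n=\Gamma(a)\Gamma(1-a)u_n=\pi u_n/\sin(\pi a)$, with $u\in\ell^2(\bbZ_+)$ precisely when $a<1/2$, is correct and verifiable.

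The gaps are in everything else. (1) Your opening claim that removing the Carleman singularity leaves a trace-class remainder is false: $k_a(\sigma)-1/\sigma$ still has a $-1/\sigma$ tail as $\sigma\to\infty$, so the difference is not even compact and carries its own band of essential spectrum --- you notice this tension two sentences later but never resolve it by naming a usable comparison operator. The workable choice (the one the paper uses for its analogous kernels) is $h_*(\sigma)=e^{-\sigma/2}/\sigma$, explicitly diagonalised with multiplicity one, against which the difference is Hilbert--Schmidt; one then still needs the strong-smoothness input of Theorem~\ref{thm.b2} and Proposition~\ref{prp.c3} to control the a.c.\ and singular continuous parts. Your two alternatives (the Mellin kernel with the Beta factor; Rosenblum's commuting differential operator) are only named, not executed, and the second is essentially the citation the theorem already rests on. (2) Absence of eigenvalues embedded in $(0,\pi]$ does not follow from ``limiting-absorption bounds''; such bounds exclude singular continuous spectrum and give local finiteness of eigenvalues, not their absence. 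A separate device is required, e.g.\ the paper's positivity trick (Lemma~\ref{lma.b3} plus integration by parts, converting an eigenvalue $E>0$ into an eigenvalue $-E$ of a non-negative operator). (3) Uniqueness of the eigenvalue for $a<1/2$ and, more seriously, the absence of \emph{any} eigenvalue for $a\ge 1/2$ are not established: showing that one explicit family of candidates leaves $\ell^2$ at $a=1/2$ does not show that nothing else appears, and the Birman--Krein/Fredholm-determinant remark is too vague to produce the sharp threshold $1/2$. Compare step 5 of the paper's proof of Theorem~\ref{thm.a2}, where a rank-one-plus-norm-$\pi$ decomposition pins the eigenvalue count between $1/\pi$ and $2$ but cannot locate the critical parameter exactly; obtaining the exact value $1/2$ here is precisely what Rosenblum's complete eigenfunction expansion delivers and what your sketch does not.
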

In fact, Rosenblum gave an explicit 
diagonalisation of the Hilbert matrix for all $a\in\bbR$, $a\not=0,-1,-2,\dots$,
from which one can read off the above description of the spectrum.

The Hilbert matrix is remarkable, in particular, for being the simplest bounded
non-compact Hankel matrix.
It exhibits the following borderline behaviour: 
\begin{itemize}
\item
If $nh(n)\to0$ as $n\to\infty$, then $\{h(n+m)\}_{n,m=0}^\infty$ is compact on $\ell^2(\bbZ_+)$;
\item
If $nh(n)\to\infty$ as $n\to\infty$, then $\{h(n+m)\}_{n,m=0}^\infty$ is unbounded on $\ell^2(\bbZ_+)$.
\end{itemize}

In this paper we discuss a similarly remarkable borderline operator in the class of 
\emph{multiplicative Hankel matrices,} i.e. in the class of infinite matrices of the form 
$$
M(g)=\{g(nm)\}_{n,m=1}^\infty,
$$
considered as linear operators on $\ell^2(\bbN)$, $\bbN=\{1,2,\dots\}$.
Here the $(n,m)$'th entry of the matrix depends on the product $nm$ instead of the sum $n+m$.  
Following \cite{QQ}, we call such operators \emph{Helson matrices,}
in honour of H.~Helson's pioneering work \cite{Helson} on the subject. 

The special Helson matrix we consider corresponds to the sequence
\[
g_a(n)=\frac1{\sqrt{n}(a+\log n)}, \quad n\geq1, \quad a>0.
\label{a0}
\]
It is not difficult to see that $M(g_a)$ is a bounded self-adjoint operator
on $\ell^2(\bbN)$ (see Theorem~\ref{thm.a4}). 
Similarly to the case of the Hilbert matrix, the Helson matrix $M(g_a)$ is borderline in the following sense: 
\begin{itemize}
\item
If $(\sqrt{n}\log n) g(n)\to0$ as $n\to\infty$, then $M(g)$ is compact on $\ell^2(\bbN)$; 
\item
If $(\sqrt{n}\log n) g(n)\to\infty$ as $n\to\infty$, then $M(g)$ is unbounded on $\ell^2(\bbN)$.
\end{itemize}

Even though it is not possible to take $a=0$ in the definition \eqref{a0}, 
one can do so if the indices $n,m$ are restricted to 
$\bbN_2=\{2,3,\dots\}$. We will denote by $M_2(g)$ 
the operator on $\ell^2(\bbN_2)$ corresponding to the matrix $\{g(nm)\}_{n,m\geq2}$.
Hence, when $a=0$, one can consider $M_2(g_0)$, with $g_0$ defined as in \eqref{a0}, i.e. 
$$
M_2(g_0)=\{(\sqrt{mn}\log(mn))^{-1}\}_{n,m=2}^\infty. 
$$
Following \cite{BPSSV}, we call $M_2(g_0)$ the \emph{multiplicative Hilbert matrix}. 

We now state our main results.
\begin{theorem}\label{thm.a1}
The multiplicative Hankel matrix $M_2(g_0)$ has a purely a.c. spectrum (no singular continuous spectrum, 
no eigenvalues) which coincides with $[0,\pi]$ and has multiplicity one. 
\end{theorem}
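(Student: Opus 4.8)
The plan is to reduce the spectral analysis of the discrete operator $M_2(g_0)$ to that of an explicit integral operator, and then to compare the latter with a solvable half-line convolution model by scattering theory. First I would record the elementary identity $g_0(nm)=\int_{1/2}^\infty (nm)^{-\sigma}\,d\sigma$, valid since $nm\ge4$. This factorises $M_2(g_0)=\Phi^*\Phi$, where $\Phi\colon\ell^2(\bbN_2)\to L^2(1/2,\infty)$ maps a sequence to its Dirichlet series, $(\Phi a)(\sigma)=\sum_{n\ge2}a_n n^{-\sigma}$. Uniqueness of Dirichlet series shows $\Phi$ is injective, so $M_2(g_0)=\Phi^*\Phi$ and $\Phi\Phi^*$ have the same spectrum away from $0$, with identical spectral type and multiplicity. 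It therefore suffices to analyse the self-adjoint integral operator $\bGank=\Phi\Phi^*$ on $L^2(1/2,\infty)$, whose kernel is $\sum_{n\ge2}n^{-(\sigma+\tau)}=\zeta(\sigma+\tau)-1$.

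Next I would isolate a solvable model. Writing $\zeta(w)=(w-1)^{-1}+\omega(w)$ with $\omega$ entire, the only source of non-compactness of $\bGank$ is the pole of $\zeta$ at the corner $\sigma=\tau=\tfrac12$; the contribution of $\omega(\sigma+\tau)$ and of the decay of the kernel away from the corner should be trace class. Under the substitution $\sigma-\tfrac12=e^{-u}$, the leading Carleman kernel $\bigl((\sigma-\tfrac12)+(\tau-\tfrac12)\bigr)^{-1}$ turns into the convolution kernel $\bigl(2\cosh\tfrac{u-v}2\bigr)^{-1}$, whose Fourier symbol is $\pi/\cosh(\pi\xi)$. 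Since the true kernel is genuinely convolution-like only near the corner and decays at the opposite end, I expect $\bGank$ to be, modulo a trace-class error, unitarily equivalent to the half-line (Wiener--Hopf) convolution operator $\bGank_0$ with symbol $\pi/\cosh(\pi\xi)$.

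The model $\bGank_0$ can be analysed explicitly. The range of its symbol is $[0,\pi]$; on the whole line, convolution by this symbol is multiplication by an even function and would have multiplicity two, but restriction to a half-line ties the two branches $\pm\xi$ together, so that $\bGank_0$ has purely absolutely continuous spectrum $[0,\pi]$ of multiplicity one. This half-line mechanism is precisely what produces the multiplicity one asserted in the theorem. It then remains to transfer this to $\bGank$. Factorising the perturbation $\bGank-\bGank_0=G^*G$ and establishing a limiting absorption principle for $\bGank_0$ on compact subsets of $(0,\pi)$, Kato's theory of smooth perturbations yields existence and completeness of the wave operators for the pair $(\bGank,\bGank_0)$. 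Consequently the a.c. parts are unitarily equivalent, giving a.c. spectrum $[0,\pi]$ of multiplicity one, and simultaneously the singular continuous spectrum is excluded on $(0,\pi)$. Absence of eigenvalues and the identity $\spec=[0,\pi]$ are already known, and the endpoints $0,\pi$ carry no spectral mass; transporting everything back through $\Phi$ gives Theorem~\ref{thm.a1}.

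The main obstacle is two-fold and concentrated in the second and third steps. First, proving that $\bGank-\bGank_0$ is trace class requires delicate estimates of the $\zeta$-kernel both near the corner, where one must control $\omega$ together with the error committed in replacing the kernel by a convolution, and in its tail. Second, one must establish the limiting absorption principle for the half-line model, since it is this estimate --- rather than the trace-class bound, which only secures unitary equivalence of the a.c. parts --- that genuinely rules out singular continuous spectrum.
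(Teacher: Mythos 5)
Your overall architecture coincides with the paper's: both factorise $M_2(g_0)=\calN^*\calN$ so that the nonzero part becomes the integral operator with kernel $\zeta(\sigma+\tau)-1$ (your $\Phi$ is the paper's $\calN_0^*$ up to the shift $\sigma=\tfrac12+t$), and both then compare this operator with an explicitly understood model via scattering theory. The genuine difference is the model. The paper stays in Hankel form and compares with $H(h_*)$, $h_*(t)=e^{-t/2}/t$, whose Lebedev--Yafaev diagonalisation by $t^{-1/2}K_{ik}(t/2)$ is taken off the shelf; the perturbation is written as $Q^\beta A Q^\beta$ with $A$ merely Hilbert--Schmidt and $Q^\beta$ strongly $H(h_*)$-smooth, and one theorem from Yafaev's book then yields the a.c.\ spectrum, its multiplicity, and the absence of singular continuous spectrum in a single stroke. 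You pass to logarithmic variables and compare with the half-line Wiener--Hopf operator with symbol $\pi/\cosh(\pi\xi)$ (equivalently, the Carleman operator truncated to $L^2(0,1)$). This is viable, but note that the multiplicity-one assertion is thereby only relocated into an unproved statement about the model, which needs a diagonalisation or a citation (e.g.\ Rosenblum's concrete spectral theory of self-adjoint Wiener--Hopf operators) exactly as the paper's $H(h_*)$ does; your route also needs two separate inputs (a trace-class bound for the a.c.\ part, a limiting absorption principle for the singular part) where the paper needs one; and it does not address eigenvalues at all --- neither embedded eigenvalues in $(0,\pi)$ nor a possible eigenvalue above $\pi$ is excluded by trace-class scattering plus an LAP, so your appeal to \cite{BPSSV} is carrying real weight there (legitimately, since the paper concedes that part is in \cite{BPSSV}, but the paper reproves it via the Mellin-transform argument of Theorem~\ref{thm.a5} together with the norm bound $\norm{H(h_0)}\le\pi$).

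One local claim is wrong as written: the operator on $L^2(\tfrac12,\infty)$ with kernel $\omega(\sigma+\tau)-1$, where $\omega(w)=\zeta(w)-(w-1)^{-1}$, is not trace class and not even compact, because $\omega(w)-1=\zeta(w)-1-(w-1)^{-1}\sim -(w-1)^{-1}$ as $w\to\infty$, so this ``remainder'' carries a full Carleman singularity at infinity. What saves your scheme is precisely that $\bGank_0$ is the \emph{truncated} convolution rather than the full corner Carleman operator: the non-compact tails of the two naive pieces cancel, and only then is $\bGank-\bGank_0$ trace class (a smooth bounded kernel on a bounded neighbourhood of the corner, plus the exponential decay of $\zeta(\sigma+\tau)-1$ elsewhere). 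Your closing paragraph shows you are aware the tail needs control, but the decomposition should be arranged so that this cancellation is explicit rather than asserting that the $\omega$-contribution alone is trace class.
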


\begin{theorem}\label{thm.a2}
For any $a>0$, the a.c. spectrum of $M(g_a)$ coincides with $[0,\pi]$ and has multiplicity one.
The singular continuous spectrum of $M(g_a)$ is absent. 
There is a critical value $a_*>0$ such that: 
\begin{enumerate}[\rm (i)]
\item
If $a\geq a_*$, then $M(g_a)$ has no eigenvalues. 
\item
If $0< a<a_*$, then $M(g_a)$ has one simple eigenvalue $\lambda(a)>\pi$,  and no other eigenvalues. 
The eigenvalue $\lambda(a)$ is a continuous non-increasing function of $a\in(0,a_*)$, with $\lim_{a \to a_*^{-}} \lambda(a) = \pi$
and $\lim_{a \to 0^{+}} \lambda(a) = \infty$. 
\end{enumerate}
\end{theorem}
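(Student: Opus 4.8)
The plan is to exploit the integral representation
\begin{equation}
\frac{1}{a+\log n}=\int_0^\infty e^{-t(a+\log n)}\,dt=\int_0^\infty e^{-at}n^{-t}\,dt,
\end{equation}
which gives $g_a(nm)=\int_0^\infty e^{-at}\varphi_t(n)\varphi_t(m)\,dt$ with $\varphi_t(n)=n^{-1/2-t}$, so that
\begin{equation}
M(g_a)=\int_0^\infty e^{-at}\,\varphi_t\otimes\varphi_t\,dt=K_a^*K_a,\qquad (K_af)(t)=e^{-at/2}\sum_{n\ge1}f(n)\,n^{-1/2-t}.
\end{equation}
Since $\varphi_t\otimes\varphi_t\ge0$, this shows at once that $M(g_a)\ge0$ and that $a\mapsto M(g_a)$ is monotone decreasing in the form sense. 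The operator $K_a\colon\ell^2(\bbN)\to L^2(0,\infty)$ is injective (a Dirichlet series with vanishing sum has vanishing coefficients), so $0$ is not an eigenvalue. The nonzero spectrum of $M(g_a)=K_a^*K_a$, together with its absolutely continuous part and multiplicity, coincides with that of $K_aK_a^*$, the integral operator on $L^2(0,\infty)$ with Hankel kernel $e^{-a(t+s)/2}\zeta(1+t+s)$. This is the reduction to an integral Hankel operator on which the scattering machinery can be brought to bear, in parallel with Theorem~\ref{thm.a1}.

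First I would isolate the model operator. Writing $\zeta(1+u)=u^{-1}+r(u)$ with $r(u)=\zeta(1+u)-u^{-1}$ bounded and continuous on $[0,\infty)$ (with $r(0)=\gamma$ and $r(u)\to1$), the remainder kernel $e^{-a(t+s)/2}r(t+s)$ decays exponentially and defines a trace-class operator for every $a>0$. Thus $K_aK_a^*$ is a trace-class perturbation of the weighted Carleman operator $C_a$ with kernel $e^{-a(t+s)/2}(t+s)^{-1}$. Passing to the logarithmic variable $t=e^x$ turns $C_a$ into convolution by $(2\cosh(x/2))^{-1}$ on $L^2(\bbR)$ — whose Fourier symbol is $\pi/\cosh(\pi\xi)$, with range $[0,\pi]$ — conjugated by the one-sided weight $e^{-ae^x/2}$, which tends to $1$ as $x\to-\infty$ and to $0$ as $x\to+\infty$. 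There is therefore a single scattering channel (the end $x\to-\infty$), the other end being closed, and I expect $C_a$ to have purely absolutely continuous spectrum $[0,\pi]$ of multiplicity one. By the Kato--Rosenblum theorem the absolutely continuous spectra of $K_aK_a^*$ and $C_a$ then agree with multiplicities, which yields the absolutely continuous part of Theorem~\ref{thm.a2}.

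To rule out singular continuous spectrum and eigenvalues embedded in $(0,\pi)$ I would establish a limiting absorption principle for the model $C_a$ and propagate it to $K_aK_a^*$ through the trace-class perturbation, exactly as in the singular-spectrum analysis underlying Theorem~\ref{thm.a1}; together with the positivity $M(g_a)\ge0$ and the injectivity of $K_a$, this confines the point spectrum to $(\pi,\infty)$. I expect this limiting absorption principle — equivalently, the existence and completeness of the wave operators together with control of the reflection coefficient for the one-channel half-line model — to be the main analytic obstacle.

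It remains to analyse the eigenvalues in $(\pi,\infty)$. Since all entries $g_a(nm)$ are strictly positive and $M(g_a)$ is irreducible, a Perron--Frobenius argument shows that the top of the spectrum, when it is an eigenvalue, is simple with a positive eigenvector. The monotonicity of the family $a\mapsto M(g_a)$ makes the top eigenvalue $\lambda(a)$ a continuous non-increasing function of $a$ wherever it exists. As $a\to0^+$ the diagonal entry $g_a(1)=1/a$ forces $\norm{M(g_a)}\to\infty$, so $\lambda(a)\to\infty$; as $a$ increases the eigenvalue descends and is absorbed into the band at the threshold $\pi$, which defines $a_*$ and gives $\lim_{a\to a_*^-}\lambda(a)=\pi$ together with the absence of eigenvalues for $a\ge a_*$. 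The remaining point — that there is never more than one eigenvalue above $\pi$ — I would extract from a perturbation-determinant count: for $\lambda>\pi$ the eigenvalues of $K_aK_a^*$ are the zeros of $D_a(\lambda)=\det\bigl(I+(C_a-\lambda)^{-1}R_a\bigr)$, where $R_a=K_aK_a^*-C_a$ is trace class, and one shows, using the explicit Mellin resolvent of $C_a$ and the structure of $R_a$ near threshold, that $D_a$ has at most one zero in $(\pi,\infty)$. This sharp count near threshold, rather than the absolutely continuous analysis, is where I expect the second main difficulty to lie.
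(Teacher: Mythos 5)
Your overall architecture --- factor $M(g_a)=K_a^*K_a$, pass to the Hankel operator $K_aK_a^*$ with kernel $e^{-a(t+s)/2}\zeta(1+t+s)$, compare with a weighted Carleman model, and treat the eigenvalues above $\pi$ separately --- is exactly the paper's. But the two steps you yourself flag as the main analytic obstacles are precisely where the content of the theorem lies, and as stated your plan for each either stops short or would not suffice. First, Kato--Rosenblum only identifies the a.c. parts; it says nothing about singular continuous spectrum or eigenvalues embedded in $(0,\pi]$, and your proposed limiting absorption principle for the one-channel model is left entirely unproved. The paper's actual route is different on both counts: the model $H(e^{-t/2}/t)$ is not merely ``expected'' to have purely a.c. spectrum of multiplicity one --- it is explicitly diagonalised by the Kontorovich--Lebedev-type transform with kernel proportional to $t^{-1/2}K_{ik}(t/2)$, and the perturbation is written as $Q^\beta A Q^\beta$ with $Q$ a logarithmic weight that is strongly smooth in Yafaev's sense relative to this diagonalisation; that is what excludes the singular continuous spectrum. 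Embedded eigenvalues are excluded by a separate and quite different device (Theorem~\ref{thm.a5}): a Mellin-transform analysis shows that any eigenfunction $f$ with eigenvalue $E\in(0,\pi]$ satisfies $f(0)=0$ and $f'\in L^2(\bbR_+)$, whence integration by parts gives $H(h_a)f'=-Ef'$, contradicting $H(h_a)\geq0$. Neither mechanism is visible in your outline.

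Second, for the interval $(\pi,\infty)$: Perron--Frobenius only gives simplicity of the \emph{top} eigenvalue and cannot exclude a second eigenvalue lying in $(\pi,\norm{M(g_a)})$, and your perturbation-determinant count is not carried out (nor is there an evident reason why $D_a$ should have at most one zero there). The paper does this in two lines: write $h_a(t)=e^{-at/2}+w_a(t)$ with $w_a(t)=e^{-at/2}\sum_{n\geq2}n^{-t-1}\leq 1/t$, so that $H(h_a)$ is a rank-one perturbation of an operator of norm at most $\pi$, and the min--max principle gives at most one eigenvalue above $\pi$. You are also missing the proof that $a_*<\infty$: monotonicity alone allows $\lambda(a)$ to remain above $\pi$ for all $a$, and one needs the pointwise bound $h_a(t)\leq 1/t$ for $a\geq2$ (hence $\norm{H(h_a)}\leq\pi$ by Lemma~\ref{lma.d1}) to produce an eigenvalue-free range of parameters. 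Similarly, the claims that $\lambda(a)\to\pi$ as $a\to a_*^-$ and that there is no eigenvalue at $a=a_*$ require the norm continuity of $a\mapsto M(g_a)$, which you do not establish.
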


Despite the similarity with Theorem~A, we do not have an explicit diagonalisation of either $M(g_a)$
or $M_2(g_0)$; to find one is an interesting open problem. 
We are also unable to explicitly compute the critical value $a_*$, and the same is true for the eigenvalue $\lambda(a)$.
We only have the crude estimates
$$
\frac1\pi\leq a_*\leq 2,\quad \frac1a\leq \lambda(a)\leq \pi+\frac1a,
$$
see Section~\ref{sec.aaa}. 

The multiplicative Hilbert matrix $M_2(g_0)$ was introduced in \cite{BPSSV}, where it was placed in the context of the study of operators acting on 
Hardy spaces of Dirichlet series. The multiplicative Hilbert matrix is especially interesting, because although its analogy to the classical Hilbert matrix is unambiguous, questions about its place within the theory of Dirichlet series are open. In particular, it is not known whether $M_2(g_0)$ has a bounded
symbol --- see Problem~3.2 in \cite{SaksmanSeip} for a precise statement.

The fact that the spectrum of $M_2(g_0)$ is purely continuous and coincides with $[0,\pi]$ 
was already proven in \cite{BPSSV}, but we give a more streamlined version of the proof.

Our main contributions in this paper are:
\begin{itemize}
\item
the proof that the singular continuous spectra of $M_2(g_0)$ and $M(g_a)$ are absent and
that the respective a.c. spectra have multiplicity one; 
\item
a clarification of the connection between $M_2(g_0)$, $M(g_a)$ 
and certain integral Hankel operators. This lies at the heart of our proof --- see Section~\ref{sec.aa}.
\end{itemize}

We also attempt to push the analogy with the Hilbert matrix a little further by introducing the 
one-parameter family $M(g_a)$, with the hope of stimulating some further progress.

\section{The strategy of the proof}\label{sec.aa}

\subsection{Reduction to integral Hankel operators}

Key to our analysis is a unitary equivalence between $M(g_a)$, $M_2(g_0)$ and certain integral Hankel operators. 
We start by recalling the definition of this class of operators. 
For a \emph{kernel function} $h\in L^1_{\loc}(\bbR_+)$, let us denote by $H(h)$ the 
\emph{integral Hankel operator} in $L^2(\bbR_+)$, formally defined by 
$$
(H(h)f)(t)=\int_0^\infty h(t+s)f(s)ds, \quad t>0, \quad f\in L^2(\bbR_+).
$$
We need a simple sufficient condition for the boundedness of integral Hankel operators. 
Let $H(1/t)$ be the Carleman operator, i.e. the integral Hankel operator with the kernel function $h(t)=1/t$. 
Recall that $H(1/t)$ is bounded on $L^2(\bbR_+)$ with norm $\pi$; we will come back to 
the spectral properties of $H(1/t)$ in Section~\ref{sec.b3}. From the boundedness of $H(1/t)$ we immediately
get the following estimate, which is both well known and easy to prove. 
\begin{lemma}\label{lma.d1}
Let $h$ be a kernel function with $\abs{h(t)}\leq A/t$ for some $A>0$.   
Then $\norm{H(h)}\leq A\pi$. 
\end{lemma}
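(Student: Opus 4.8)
The plan is to dominate the sesquilinear form of $H(h)$ by that of the Carleman operator $H(1/t)$, whose norm $\pi$ is already supplied to us. First I would fix $f,g\in L^2(\bbR_+)$ and write the form of $H(h)$ as the double integral
\[
\jap{H(h)f,g}=\int_0^\infty\int_0^\infty h(t+s)f(s)\overline{g(t)}\,ds\,dt .
\]
The pointwise hypothesis $\abs{h(\tau)}\leq A/\tau$ gives $\abs{h(t+s)}\leq A/(t+s)$ throughout the quadrant, so passing to absolute values inside the integrand yields
\[
\abs{\jap{H(h)f,g}}\leq A\int_0^\infty\int_0^\infty \frac{1}{t+s}\,\abs{f(s)}\,\abs{g(t)}\,ds\,dt .
\]
The right-hand side is precisely $A\jap{H(1/t)\abs{f},\abs{g}}$, the form of the Carleman operator evaluated on the nonnegative functions $s\mapsto\abs{f(s)}$ and $t\mapsto\abs{g(t)}$.

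Next I would invoke the stated boundedness $\norm{H(1/t)}=\pi$ together with the Cauchy--Schwarz inequality to estimate
\[
A\jap{H(1/t)\abs{f},\abs{g}}\leq A\,\norm{H(1/t)}\,\norm{\abs{f}}\,\norm{\abs{g}}=A\pi\,\norm{f}\,\norm{g}.
\]
Combining the two displays gives $\abs{\jap{H(h)f,g}}\leq A\pi\,\norm{f}\,\norm{g}$ for all $f,g\in L^2(\bbR_+)$, and taking the supremum over unit vectors $f,g$ yields $\norm{H(h)}\leq A\pi$, as claimed.

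The argument has essentially no hard step; the only point deserving a word of care is the justification that the double integral converges absolutely, so that passing to absolute values and applying Fubini is legitimate and the form $\jap{H(h)f,g}$ is genuinely well defined. This is immediate from the chain above, since the majorant $A\jap{H(1/t)\abs{f},\abs{g}}$ is finite because $H(1/t)$ is bounded and $\abs{f},\abs{g}\in L^2(\bbR_+)$. Thus the domination by the Carleman operator simultaneously supplies both the finiteness needed to define $H(h)$ on $L^2(\bbR_+)$ and the desired norm bound.
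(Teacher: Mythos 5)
Your proof is correct and follows essentially the same route as the paper: dominating the sesquilinear form of $H(h)$ pointwise by $A$ times the form of the Carleman operator applied to $\abs{f}$, $\abs{g}$, and then using $\norm{H(1/t)}=\pi$. The extra remarks on absolute convergence are a welcome but inessential elaboration of the paper's one-line argument.
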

\begin{proof}
For $f_1,f_2\in L^2(\bbR_+)$, we have
$$
\abs{(H(h)f_1,f_2)}\leq A(H(1/t)\abs{f_1},\abs{f_2})\leq A\pi\norm{\abs{f_1}}\norm{\abs{f_2}}=A\pi\norm{f_1}\norm{f_2}.
\qedhere
$$
\end{proof}

Let
$$
\zeta(t)=\sum_{n=1}^\infty n^{-t}, \quad t>1,
$$
be the Riemann zeta function. 
As usual, for a bounded operator $H$ in a Hilbert space, we say that $H$ is non-negative, $H\geq0$, if
$(Hx,x)\geq0$ for all elements $x$ in the Hilbert space. 

Now we are ready to state the unitary equivalence between $M_2(g_0)$, $M(g_a)$, and integral Hankel operators. 
\begin{theorem}\label{thm.a4}
\begin{enumerate}[\rm (i)]
\item
The operator $M_2(g_0)$ is bounded, non-negative, has trivial kernel, and is unitarily equivalent to 
$$
H(h_0)|_{(\Ker H(h_0))^\perp}, \quad h_0(t)=\zeta(t+1)-1.
$$
\item
For any $a>0$, the operator $M(g_a)$ is bounded, non-negative, has trivial kernel, and is unitarily equivalent to 
$$
H(h_a)|_{(\Ker H(h_a))^\perp}, \quad h_a(t)=\zeta(t+1)e^{-at/2}.
$$
\end{enumerate}
\end{theorem}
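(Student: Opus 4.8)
The plan is to establish a unitary equivalence between a Helson matrix $M(g)$ and an integral Hankel operator by realizing both on a common function space through the Mellin transform. The key observation is that the multiplicative structure $g(nm)$ should, under the Mellin transform, turn multiplication of integers into addition of logarithms, thereby converting the Helson matrix into a Hankel integral operator on $\bbR_+$. Concretely, I would introduce the isometry from $\ell^2(\bbN)$ (or $\ell^2(\bbN_2)$) into $L^2(\bbR_+)$ that encodes each standard basis vector $e_n$ via a function peaked near $t = \log n$, exploiting the identity $n^{-t} = e^{-t\log n}$. The Dirichlet series heuristic $\sum_n a_n n^{-s}$ suggests using the map that sends the sequence $\{a_n\}$ to the boundary data of its associated Dirichlet series, and the appearance of $\zeta(t+1)$ in the kernel is exactly what one expects, since $\sum_{k=1}^\infty k^{-(t+1)} = \zeta(t+1)$ and the kernel must reproduce the matrix entries $g(nm) = \int_0^\infty h(t)(nm)^{-?}\,\dots$ through an integral representation.

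First I would write down the integral representation of the matrix entries. The essential point is to represent $g_a(N) = \frac{1}{\sqrt N\,(a+\log N)}$ as a Laplace-type integral: since $\frac{1}{a + \log N} = \int_0^\infty e^{-(a+\log N)s}\,ds$ for $\log N > -a$, one obtains $g_a(N) = \frac{1}{\sqrt N}\int_0^\infty e^{-as} N^{-s}\,ds$. Substituting $N = nm$ factorizes $N^{-s} = n^{-s} m^{-s}$ and $\frac{1}{\sqrt N} = \frac{1}{\sqrt n}\frac{1}{\sqrt m}$, which is precisely the structure needed to realize $M(g_a)$ as a Gram-type operator $V^* H V$ for a suitable map $V$. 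I would then identify the kernel function by computing, for the candidate isometry, the composition and checking that it reproduces $H(h_a)$ with $h_a(t) = \zeta(t+1)e^{-at/2}$; the factor $e^{-at/2}$ and the half-shift in the exponent arise from symmetrizing the $\frac{1}{\sqrt n}$ weights between the two indices. For the $a=0$, $\bbN_2$ case, the subtraction of the $n=1$ term (the ``$-1$'' in $h_0(t) = \zeta(t+1)-1$) reflects the restriction of the summation index to $\bbN_2$, removing the $k=1$ contribution to $\zeta$.

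Next I would verify the three structural claims in order. Boundedness follows from Lemma~\ref{lma.d1}: one checks the pointwise bound $\abs{h_a(t)} \leq A/t$ near $t=0$, using that $\zeta(t+1) \sim 1/t$ as $t \to 0^+$, so $h_a(t) \leq C/t$ for small $t$, while for large $t$ the exponential (resp.\ the decay of $\zeta(t+1)-1$) gives rapid decay; hence $\norm{H(h_a)} < \infty$. Non-negativity is the cleanest part: since $h_a(t+s)$ is (up to the weight) a superposition of the positive-definite kernels $(t+s)\mapsto e^{-(t+s)\log k}$, the operator $H(h_a)$ is a sum (integral) of rank-one non-negative operators, hence $H(h_a)\geq 0$; equivalently this is immediate from the Gram representation $M(g_a) = V^* V$-type structure. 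The unitary equivalence with the compression to $(\Ker H(h_a))^\perp$ then follows once I show that $V$ is a bijection from $\ell^2$ onto $(\Ker H(h_a))^\perp$, so that $M(g_a)$ and $H(h_a)|_{(\Ker H(h_a))^\perp}$ have the same nonzero spectral data, and that $M(g_a)$ itself has trivial kernel (equivalently $V$ is injective with dense range into the relevant subspace).

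The main obstacle, I expect, is the precise construction of the unitary map and the verification that its range is exactly $(\Ker H(h_a))^\perp$ rather than a proper subspace. The naive Dirichlet-series map is an isometry only after the correct normalization, and controlling its range requires understanding the kernel of the integral Hankel operator $H(h_a)$ --- which is generally nontrivial for integral Hankel operators and must be matched exactly to the orthogonal complement of the range of $V$. Establishing that $M(g_a)$ has trivial kernel is also delicate, since it amounts to an injectivity (uniqueness) statement for the associated Dirichlet series, and I anticipate this is where the $\zeta$-function asymptotics and a completeness argument for the system $\{n^{-s}\}$ will be needed.
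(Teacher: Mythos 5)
Your proposal follows essentially the same route as the paper: the Laplace-type representation $g_a(nm)=\int_0^\infty e^{-at}(nm)^{-\frac12-t}\,dt$ gives a bounded factorization $M(g_a)=\calN_a\calN_a^*$, $H(h_a)=\calN_a^*\calN_a$, with boundedness from Lemma~\ref{lma.d1} and non-negativity from the Gram structure, exactly as in the paper. The only point where you overestimate the difficulty is your ``main obstacle'': you do not need the factorizing map to be a bijection onto $(\Ker H(h_a))^\perp$, since the standard polar-decomposition fact that $\calN^*\calN|_{(\Ker\calN)^\perp}$ and $\calN\calN^*|_{(\Ker\calN^*)^\perp}$ are always unitarily equivalent disposes of the range issue, and only the injectivity of $\calN_a^*$ (the Dirichlet-series uniqueness you correctly flag, which the paper settles by inspecting the $t\to\infty$ asymptotics inductively in $m$) requires a separate argument.
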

In fact, it is not difficult to see that the kernels of
$H(h_0)$ and $H(h_a)$ are also trivial. However, we will not need this observation in our proof. 
\begin{proof}
(i) 
Consider the operator
$\calN_0: L^2(\bbR_+)\to \ell^2(\bbN_2)$, given by 
$$
(\calN_0 f)_n=\int_0^\infty n^{-\frac12-t}f(t)dt, 
\quad n\in\bbN_2,\quad f\in L^2(\bbR_+).
$$
For $f_1,f_2\in L^2(\bbR_+)$, 
we have that
\begin{align*}
(\calN_0f_1,\calN_0f_2)
&=
\sum_{n=2}^\infty \int_0^\infty \int_0^\infty n^{-1-t-s}f_1(s)\overline{f_2(t)}\, ds\, dt \\
&=
\int_0^\infty \int_0^\infty (\zeta(t+s+1)-1)f_1(s)\overline{f_2(t)} \, ds\, dt.
\end{align*}
Thus
$$
(\calN_0 f_1,\calN_0f_2)=(H(h_0)f_1,f_2), \quad h_0(t)=\zeta(t+1)-1. 
$$
We have the elementary bound
\[
h_0(t)=\sum_{n=2}^\infty n^{-t-1}\leq\int_1^\infty\frac{dx}{x^{t+1}}=\frac1t, \quad t>0.
\label{d1}
\]
Together with Lemma~\ref{lma.d1}, this bound shows that $H(h_0)$ is bounded
on $L^2(\bbR_+)$. 
Thus, $\calN_0$ is also bounded and 
$$
H(h_0)=\calN_0^*\calN_0.
$$
Next, consider the adjoint $\calN_0^*: \ell^2(\bbN_2)\to L^2(\bbR_+)$, given by
$$
(\calN_0^* x)(t)=\sum_{m\geq2} x_m m^{-\frac12-t}, 
\quad t>0, \quad x=\{x_m\}_{m=2}^\infty\in\ell^2(\bbN_2).
$$
Since
$$
g_0(n)=(\sqrt{n}\log n)^{-1}=\int_0^\infty n^{-\frac12-t}dt,\quad n\geq2,
$$
we see that the operator $\calN_0\calN_0^*$ on $\ell^2(\bbN)$ is given by 
$$
(\calN_0\calN_0^* x)_n=\sum_{m\geq2} g_0(nm)x_m,
\quad x\in \ell^2(\bbN),
$$
i.e. $\calN_0\calN_0^*=M_2(g_0)$. 

It is well known that, for any bounded operator $\calN$, the operators
$$
\calN^*\calN|_{(\Ker\calN)^\perp}
\quad \text{ and }\quad
\calN\calN^*|_{(\Ker\calN^*)^\perp}
$$
are unitarily equivalent. 
This gives the required unitary equivalence between the non-zero parts of $M_2(g_0)$ and $H(h_0)$. 

To complete the proof of part (i), it remains to show that the kernel of $\calN_0^*$ is trivial. 
This is easy to check: if $\calN_0^* x=0$, then, inspecting the asymptotics of $(\calN_0^* x)(t)$ as $t\to\infty$, 
we inductively prove that $x_m=0$ for $m=2,3,\dots$. 

(ii)
For a fixed parameter $a>0$,  let $\calN_a: L^2(\bbR_+)\to \ell^2(\bbN)$ be the linear operator given by 
$$
(\calN_a f)_n=\int_0^\infty e^{-at/2}n^{-\frac12-t}f(t)dt, 
\quad n\in\bbN,\quad f\in L^2(\bbR_+).
$$
Similarly to part (i), we have 
$$
\calN_a^*\calN_a=H(h_a), \quad h_a(t)=\zeta(t+1)e^{-at/2}. 
$$
Using \eqref{d1}, we get
$$
h_a(t)=e^{-at/2}\sum_{n=1}^\infty n^{-t-1}\leq e^{-at/2}(1+1/t)\leq C(a)/t, \quad t > 0,
$$
and so $H(h_a)$ is bounded. 

On the other hand, the adjoint $\calN_a^*: \ell^2(\bbN)\to L^2(\bbR_+)$
is given by 
$$
(\calN_a^* x)(t)=e^{-at/2}\sum_{m\geq1} x_m m^{-\frac12-t}, 
\quad t>0, \quad x=\{x_m\}_{m\geq1}\in\ell^2(\bbN).
$$
Since
$$
g_a(n)=\frac1{\sqrt{n}(a+\log n)}=\int_0^\infty e^{-at}n^{-\frac12-t}dt,
\quad
n\geq1,
$$
we obtain
$$
\calN_a\calN_a^*=M(g_a).
$$
This gives the required unitary equivalence. 
Again it is easy to see that the kernel of $\calN_a^*$ is trivial. 
\end{proof}

\subsection{Some heuristics}
Theorem~\ref{thm.a4} reduces the question to the analysis of integral Hankel operators
with specific kernel functions. Let us recall some basic facts about such operators. 

It is well known that the Carleman operator has a purely a.c. spectrum $[0,\pi]$
of multiplicity \emph{two}. In fact, $H(1/t)$ is explicitly diagonalised by the Mellin transform, 
see Section~\ref{sec.b3} below. 
From Lemma~\ref{lma.d1} it is easy to conclude that 
$$
h(t)=o(1/t) \text{ as } t\to0 \text{ and as } t\to\infty \quad \Rightarrow \quad H(h) \text{ is compact.}
$$
Heuristically, the behaviour $1/t$ of the kernel function is \emph{singular} both as $t\to0$ and as $t\to\infty$. The spectrum of $H(1/t)$ has multiplicity two because each 
of these singularities generates an interval of a.c. spectrum of multiplicity one. 
J.~S.~Howland  has made this observation more precise in \cite{Howland} by proving, 
brushing some technical details aside, that if 
\[
h(t)=\begin{cases} 
c_0/t+\text{error term}, & t\to0,
\\
c_\infty/t+\text{error term}, & t\to\infty,
\end{cases}
\label{a7}
\]
then the a.c. spectrum of $H(h)$ is given by the union of intervals
$$
\sigma_{\ac}(H(h))=[0,\pi c_0]\cup [0,\pi c_\infty],
$$
where each of the two intervals contributes multiplicity one to the spectrum. 
(Howland was motivated, on the one hand, by the earlier work \cite{Power} of S.~Power, which concerns 
the essential spectrum of Hankel operators with piecewise continuous symbols,
and on the other hand, by similar results in scattering theory for Schr\"odinger operators.)
Howland's results were further extended in \cite{PYa}, where a more general class of kernel functions
was considered. 

Now let us come back to the kernel functions $h_a$ and $h_0$ of Theorem~\ref{thm.a4}. 
Recall that zeta function $\zeta(z)$ has a simple pole at $z=1$ with residue $1$ and so 
\[
\zeta(1+t)-1/t\in C^\infty([0,\infty)), 
\label{a11}
\]
and 
\[
\zeta(t)=1+O(2^{-t}), \quad t\to\infty. 
\label{a12}
\]
It follows that $h_a$ satisfies
$$
h_a(t)=
\begin{cases}
1/t+O(1), & t\to0,
\\
o(1/t), & t\to\infty,
\end{cases}
$$
both for $a=0$ and for $a>0$. 
Thus, $h_a$ satisfies \eqref{a7} with $c_0=1$ and $c_\infty=0$ and so, according to Howland's 
paradigm, we should expect $H(h_a)$ to have the a.c. spectrum $[0,\pi]$ of multiplicity one. 
This is indeed what we will prove.

\subsection{The absolutely continuous spectrum}

The statements about the absolutely continuous and the singular continuous spectra of $H(h_a)$ 
are consequences of the following theorem.

\begin{theorem}\label{thm.a4a}
Let $h(t)$, $t>0$, be a real-valued kernel function such that 
\[
\begin{split}
\abs{h(t)-1/t}&\leq Ct^{-1+\eps}, \quad 0<t\leq1,
\\
\abs{h(t)}&\leq Ct^{-1-\eps}, \quad t\geq1,
\end{split}
\label{a8}
\]
with some $\eps>0$ and $C>0$. 
Then the Hankel operator $H(h)$ is bounded and has the essential spectrum $[0,\pi]$. 
The absolutely continuous spectrum of $H(h)$ has multiplicity one and coincides
with the same interval $[0,\pi]$. The singular continuous spectrum of $H(h)$ is absent. 
\end{theorem}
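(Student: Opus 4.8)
The plan is to compare $H(h)$ with an explicitly analysable model operator that retains the singularity of the kernel only at $t=0$. Boundedness is immediate: the hypotheses \eqref{a8} give $\abs{h(t)}\leq(1+C)/t$ for $0<t\leq1$ and $\abs{h(t)}\leq C/t$ for $t\geq1$, hence $\abs{h(t)}\leq A/t$ with $A=\max(1+C,C)$, so Lemma~\ref{lma.d1} yields $\norm{H(h)}\leq A\pi$. Fix now a smooth cut-off $\eta$ with $\eta(t)=1$ for $t\leq1/2$ and $\eta(t)=0$ for $t\geq1$, and set $p(t)=\eta(t)/t$; the model operator $H(p)$ carries the Carleman singularity $1/t$ as $t\to0$ and decays to $0$ as $t\to\infty$, so in the notation of \eqref{a7} it has $c_0=1$ and $c_\infty=0$.

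The first step is the essential spectrum. The difference kernel $k=h-p$ is bounded on $[1/2,1]$ and satisfies $\abs{k(t)}\leq Ct^{-1+\eps}=o(1/t)$ as $t\to0$ (the $1/t$ terms cancel where $\eta\equiv1$) and $\abs{k(t)}\leq Ct^{-1-\eps}=o(1/t)$ as $t\to\infty$. By the compactness criterion recorded after Lemma~\ref{lma.d1}, $H(h)-H(p)=H(k)$ is compact, so $\sigma_{\ess}(H(h))=\sigma_{\ess}(H(p))$. To identify the model I would pass to the logarithmic variable via the unitary $U\colon L^2(\bbR_+)\to L^2(\bbR)$, $(Uf)(x)=e^{x/2}f(e^x)$, under which $H(p)$ becomes the integral operator on $L^2(\bbR)$ with kernel $\eta(e^x+e^y)(2\cosh(\tfrac{x-y}{2}))^{-1}$. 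As $x,y\to-\infty$ this tends to the Carleman convolution kernel $(2\cosh(\tfrac{x-y}{2}))^{-1}$, whose Fourier symbol is $\pi/\cosh(\pi\xi)$ with range $[0,\pi]$, while as $x\to+\infty$ it tends to $0$. Thus the model is, asymptotically, the Carleman convolution at the left end of the line and the zero operator at the right end: a single-channel problem whose essential and absolutely continuous spectrum is $[0,\pi]$, now with multiplicity one, in accordance with Howland's paradigm ($c_0=1$ contributes one branch $[0,\pi]$, $c_\infty=0$ contributes nothing).

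The second step is the absolutely continuous spectrum, its multiplicity, and the absence of singular continuous spectrum, for which I would use the scattering theory of \cite{PYa,Howland}. The model $H(p)$ is covered by that framework: it has purely absolutely continuous spectrum $[0,\pi]$ of multiplicity one, and satisfies a limiting absorption principle on every compact subinterval of $(0,\pi)$. The power gain $\eps$ in \eqref{a8} is precisely what is needed to treat $V=H(h)-H(p)=H(k)$ as a smooth perturbation: the $o(1/t)$ margin at both ends lets one factor $V$ through weights that are $H(p)$-smooth in the sense of Kato, so the stationary scheme applies. This gives existence and completeness of the wave operators for the pair $(H(h),H(p))$, hence unitary equivalence of the absolutely continuous parts and the claimed multiplicity-one spectrum on $[0,\pi]$; moreover the limiting absorption principle persists for $H(h)$ on $(0,\pi)$, up to a discrete set of points. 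Since a discrete set carries no continuous singular measure, this excludes singular continuous spectrum altogether (while still allowing the isolated eigenvalue that appears in Theorem~\ref{thm.a2}).

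The main obstacle is exactly this last point. Trace-class (Kato--Rosenblum) theory alone would deliver the absolutely continuous spectrum and its multiplicity but cannot rule out singular continuous spectrum, and an explicit diagonalisation of $H(h)$ is not available. The real work is therefore the limiting absorption principle: proving uniform bounds for the boundary values $R(\lambda\pm i0)$ of the model's resolvent in weighted $L^2$ spaces, and propagating them to $H(h)$ through the resolvent identity using the $\eps$-smoothness of $V$. The analysis is most delicate near the two thresholds --- at $\lambda=\pi$, where the symbol $\pi/\cosh(\pi\xi)$ attains its maximum and the group velocity $\tfrac{d}{d\xi}\bigl(\pi/\cosh(\pi\xi)\bigr)$ vanishes, and at $\lambda=0$, corresponding to $\xi\to\pm\infty$.
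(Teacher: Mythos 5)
Your overall strategy is the right one --- trace-class theory cannot exclude singular continuous spectrum, so one must compare $H(h)$ with a model operator having a single Carleman-type singularity and run the smooth (stationary) method. But as written the proposal is a plan rather than a proof, and the gap is exactly where you locate it yourself: every spectral assertion about the model $H(p)$, $p(t)=\eta(t)/t$, is left unestablished. You assert that $H(p)$ has purely absolutely continuous spectrum $[0,\pi]$ of multiplicity one and satisfies a limiting absorption principle on compact subintervals of $(0,\pi)$, and you acknowledge that proving this LAP is ``the real work'' --- but you never do it. The cutoff model has no explicit diagonalisation, so establishing the LAP for it requires a Mourre estimate or an equivalent commutator argument (this is essentially what Howland had to do in \cite{Howland}), and even the identification $\sigma_{\ess}(H(p))=[0,\pi]$ from the ``asymptotically convolution'' picture in the logarithmic variable needs a Weyl-sequence or Power-type argument that you only gesture at. Deferring all of this to \cite{PYa,Howland} defeats the purpose, since those papers treat the harder two-singularity situation and the point of the theorem is to give a self-contained single-channel argument.

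The paper avoids this entirely by a different choice of model: $h_*(t)=e^{-t/2}/t$ instead of $\eta(t)/t$. The operator $H(h_*)$ is \emph{explicitly} diagonalised by the Kontorovich--Lebedev-type transform with generalised eigenfunctions $\psi_k(t)=\pi^{-1}\sqrt{2k\sinh(\pi k)}\,t^{-1/2}K_{ik}(t/2)$ and eigenvalue curve $\lambda(k)=\pi/\cosh(\pi k)$ \cite{Lebedev1,Lebedev2,Rosenblum,Yafaev1}. This gives the purely a.c.\ spectrum $[0,\pi]$ of multiplicity one, and hence the essential spectrum, for free, and --- more importantly --- it provides the explicit spectral representation in which the required smoothness can be \emph{verified} rather than postulated: the paper writes $H(h)-H(h_*)=Q^\beta A Q^\beta$ with $Q$ the multiplication by a logarithmic weight and $A$ Hilbert--Schmidt (which uses precisely the $\eps$-margins in \eqref{a8}), and proves by direct estimates on $K_{ik}$ that $Q^\beta$ is strongly $H(h_*)$-smooth with H\"older exponent $\gamma>1/2$ on compact subintervals of $(0,\pi)$. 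The conclusion then follows from a standard, already-proved theorem of smooth scattering theory (Proposition~\ref{prp.c3}, from \cite[Section 4.7]{Yafaev2}); no separate treatment of the thresholds $\lambda=0,\pi$ is needed because that proposition only requires smoothness on compact subintervals of the open interval. To repair your argument along your own lines you would either have to prove a Mourre estimate for $H(p)$ near every energy in $(0,\pi)$, or replace $p$ by $h_*$ and exploit its explicit eigenfunction expansion as the paper does.
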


The proof is given in Section~\ref{sec.c}. 
In fact, this theorem follows from \cite[Theorem 2]{Howland}, established with Mourre's inequality, 
or from  \cite[Theorem 7.10]{PYa}, where the smooth method of scattering theory is used.
However, in both of these references the argument is much more complicated than necessary for our purposes. The kernels considered in \cite{Howland, PYa} have two or more singularities, which leads to an a.c. 
spectrum of multiplicity two or more, necessitating the use of multi-channel scattering theory.
Here we give a much simpler single-channel argument. 

Conditions \eqref{a8} can be relaxed somewhat by replacing
$t^{\pm\eps}$ by suitable powers of $\abs{\log t}$, see \cite{PYa}. 

\subsection{The absence of embedded eigenvalues}

The absence of eigenvalues of $H(h_a)$ in the interval $(0,\pi]$ will be established
through the following theorem. 

\begin{theorem}\label{thm.a5}
Let $h$ be as in Theorem~\ref{thm.a4a}. Assume in addition that $H(h)\geq0$,
that the function $\wt h(t)=h(t)-1/t$ satisfies
$\wt h \in C^2([0,\infty))$, and that 
\[
\biggl(\frac{d}{dt}\biggr)^k \wt h(t)=O(t^{-1-k}), \quad t\to\infty, \quad k=1,2.
\label{a9}
\]
Then $H(h)$ has no eigenvalues in $(0,\pi]$.
\end{theorem}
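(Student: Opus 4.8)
The plan is to run a commutator (virial/Mourre) argument based on the explicit diagonalisation of the Carleman operator $H(1/t)$, treating $\wt h=h-1/t$ as a perturbation in the splitting $H(h)=H(1/t)+H(\wt h)$. Under the Mellin transform $H(1/t)$ becomes multiplication by $m(\xi)=\pi/\cosh(\pi\xi)$ on $L^2(\bbR)$, a function mapping $\bbR$ onto $(0,\pi]$ and attaining the threshold $\pi$ only at $\xi=0$. The decisive choice is that of the conjugate operator: the generator of dilations in $t$ becomes \emph{multiplication by $\xi$} in the Mellin picture and hence commutes with $m(\xi)$, so it is useless; instead I take $A$ to be the generator of \emph{dilations in the Mellin variable} $\xi$. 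A direct computation then gives $i[H(1/t),A]=-\xi m'(\xi)=\pi^2\,\xi\,\sinh(\pi\xi)/\cosh^2(\pi\xi)$, which is non-negative and vanishes only at $\xi=0$. This positivity is the analytic reflection of the fact, already available from Theorem~\ref{thm.a4a}, that the a.c. spectrum has multiplicity one.

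Next comes the virial theorem. Suppose $\lambda\in(0,\pi)$ is an eigenvalue with eigenfunction $\psi$. I would first bootstrap regularity and decay of $\psi$ out of the eigenequation $H(h)\psi=\lambda\psi$ together with \eqref{a8}--\eqref{a9}, so as to place $\psi\in\Dom A$ and to justify rigorously the identity $(\psi,i[H(h),A]\psi)=0$. The Carleman term then contributes $(\psi,(-\xi m'(\xi))\psi)$, which is strictly positive unless the Mellin transform of $\psi$ is supported at $\xi=0$, i.e. unless $\psi=0$. To reach a contradiction it therefore suffices to bound $(\psi,i[H(\wt h),A]\psi)$ suitably from below; concretely, I aim for a Mourre estimate $E_I(H(h))\,i[H(h),A]\,E_I(H(h))\geq \theta\,E_I(H(h))+K$ on every interval $I\Subset(0,\pi)$, with $\theta>0$ and $K$ compact, which together with the virial identity excludes eigenvalues in $(0,\pi)$.

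The main obstacle lies precisely in this perturbation term. Passing to the logarithmic variable $x=\log t$, the two ends $t\to0$ and $t\to\infty$ become the two spatial infinities $x\to\mp\infty$, and the single-channel structure appears: since $\wt h(t)\sim-1/t$ as $t\to\infty$, the operator $H(\wt h)$ behaves near $x\to+\infty$ like a localised copy of $-H(1/t)$, so $i[H(\wt h),A]$ carries there a \emph{non-compact} piece of the \emph{wrong} sign that cannot simply be absorbed into $K$. The resolution must use that for the full kernel one has $h=o(1/t)$ as $t\to\infty$, so $H(h)$ is effectively negligible in this channel and a.c. states with energy in $I\Subset(0,\pi)$ carry no mass into it. I would exploit the smoothness and decay \eqref{a9} to write $i[H(\wt h),A]$ as a genuinely compact part (from the $t\to0$ region and the $o(1/t)$ correction) plus a localisation near $t\to\infty$ whose contribution to $E_I(H(h))\,\cdot\,E_I(H(h))$ is controlled because $\dist(I,0)>0$; the positivity $H(h)\geq0$ and the $C^2$-regularity keep all remainders well defined and of the correct order. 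I expect this far-end analysis to be the genuinely hard step.

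Finally the threshold $\lambda=\pi$ must be treated separately, since $-\xi m'(\xi)$ vanishes quadratically at $\xi=0$ and the Mourre estimate degenerates there. For this endpoint I would use positivity directly: writing $\pi-H(1/t)$ as the Mellin multiplier $\pi(1-1/\cosh(\pi\xi))\asymp\xi^2$ near $\xi=0$, a putative eigenfunction at $\pi$ satisfies $(\pi-H(1/t))\psi=H(\wt h)\psi$ and must have Mellin transform sharply concentrated at $\xi=0$; combined with $H(h)\geq0$ and the regularity of $\psi$ already obtained, this forces $\psi=0$. With the correct conjugate operator $A$ in place, I expect the regularity bootstrap and this threshold argument to be comparatively routine, the perturbation/far-end estimate being the crux.
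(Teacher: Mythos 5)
Your plan has a decisive gap at its final step: a Mourre estimate $E_I(H)\,i[H,A]\,E_I(H)\geq\theta E_I(H)+K$ with $K$ merely \emph{compact} does not exclude eigenvalues in $I$. Combined with the virial identity $(\psi,i[H,A]\psi)=0$ it only shows that the point spectrum in $I$ is finite: if $\{\psi_n\}$ were an infinite orthonormal family of eigenfunctions then $\psi_n\rightharpoonup0$ forces $(\psi_n,K\psi_n)\to0$, contradicting $0\geq\theta\norm{\psi_n}^2+(\psi_n,K\psi_n)$; but a single eigenfunction can perfectly well satisfy $0\geq\theta\norm{\psi}^2+(\psi,K\psi)$. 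To rule out eigenvalues you would need a \emph{strict} Mourre estimate with $K=0$, and the non-compact, wrong-sign contribution of $i[H(\wt h),A]$ from the end $t\to\infty$ (where $\wt h(t)\sim-1/t$), which you yourself identify as the crux, makes that unavailable. This is exactly why the paper records Proposition~\ref{prp.c3}(iii) --- finite multiplicities, accumulation only at the endpoints, i.e.\ precisely what the commutator/smooth machinery delivers --- and then explicitly declines to use it, proving absence of eigenvalues by a different method. A further warning sign is that your interior argument makes no essential use of the hypothesis $H(h)\geq0$, which is indispensable here. The unproved far-end localisation and the sketched threshold case $\lambda=\pi$ are additional gaps, but secondary to this one.

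The paper's mechanism is elementary and entirely different. Writing the eigenvalue equation as $(H(1/t)-E)f=-H(\wt h)f$ and using that the Mellin transform diagonalises $H(1/t)$ as multiplication by $\pi/\sin(\pi z)$ on $\Re z=\tfrac12$, one shows that $\calM f$ extends analytically to a strip $-\tfrac12-\eps<\Re z<\tfrac12+\eps$ with integrable bounds (the potential poles from the zeros of $\pi/\sin(\pi z)-E$ on $\Re z=\tfrac12$ are excluded by Plancherel, and the pole at $z=0$ cancels); shifting the inversion contour then yields $f(0)=0$ and $f'\in L^2(\bbR_+)$. Differentiating $H(h)f=Ef$ and integrating by parts --- the boundary term vanishes because $f(0)=0$ --- gives $H(h)f'=-Ef'$, so $-E<0$ would be an eigenvalue of the non-negative operator $H(h)$, forcing $f'\equiv0$ and hence $f\equiv0$. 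If you wish to keep a commutator flavour, you must find a globally positive identity valid for eigenfunctions with no compact remainder; in effect, that is what this differentiation trick accomplishes.
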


The proof, which is given in Section~\ref{sec.b}, 
is an extension of an argument from \cite{BPSSV}. 
We note that the argument in \cite{BPSSV} was somewhat obscured by the fact that 
the equivalence between $M_2(g_0)$ and the integral Hankel operator $H(h_0)$ was not fully 
understood. Because of this, the argument in \cite{BPSSV} is presented in terms
of functions given by Dirichlet series, which makes it a little more complicated
than necessary. 
Here we give a more streamlined (and more general) version.

The conditions on $h$ in Theorem~\ref{thm.a5} are certainly not optimal, 
but they are sufficient for proving our main results, since
the kernels $h_0$  and $h_a$ satisfy them.

\subsection{The structure of the paper}

In Section~\ref{sec.c} we prove Theorem~\ref{thm.a4a}. 
In Section~\ref{sec.b} we prove Theorem~\ref{thm.a5}. 
In Section~\ref{sec.aaa} we use these two results, and some additional concrete analysis related to the eigenvalues in the interval $(\pi,\infty)$, to prove Theorems~\ref{thm.a1} and \ref{thm.a2}.

\section{The absolutely continuous spectrum: proof of Theorem~\ref{thm.a4a}}\label{sec.c}

\subsection{Preliminaries}

The idea of the proof is as follows.
Consider the integral Hankel operator $H(h_*)$ with the kernel function   
$$
h_*(t)=\frac{e^{-t/2}}{t}, \quad t>0. 
$$
This kernel function satisfies \eqref{a7} with $c_0=1$ and $c_\infty=0$, and so, 
according to Howland's paradigm, we expect the a.c. spectrum of $H(h_*)$ 
to be $[0,\pi]$ with multiplicity one. This is indeed the case, and in fact, 
an explicit diagonalisation of $H(h_*)$ is available 
\cite{Lebedev1,Lebedev2,Rosenblum,Yafaev1}. This operator has 
a purely a.c. spectrum $[0,\pi]$ 
with multiplicity one (no singular continuous spectrum, no eigenvalues), and the family of generalised eigenfunctions
of the continuous spectrum is known in explicit form, as described in the next subsection. 

Let $h$ be as in Theorem~\ref{thm.a4a}; 
as a warm-up, let us check that $\sigma_\ess(H(h))=[0,\pi]$. 
We have
$$
H(h)=H(h_*)+H(w),
$$
where $w(t)=h(t)-h_*(t)$ satisfies
$$
w(t)=\begin{cases}
O(t^{-1+\eps}),& t\to0,
\\
O(t^{-1-\eps}),&t\to\infty.
\end{cases}
$$
It follows that $\int_0^\infty t\abs{w(t)}^2dt<\infty$, and therefore the integral 
kernel $w(t+s)$ of $H(w)$ is in $L^2(\bbR_+\times\bbR_+)$. 
Thus, $H(w)$ is Hilbert-Schmidt, and therefore compact. 
By Weyl's theorem on the invariance of the essential spectrum under compact 
perturbations, we obtain that $\sigma_\ess(H(h))=\sigma_\ess(H(h_*))=[0,\pi]$, as required. 

Now let us outline the idea of proof of the rest of Theorem~\ref{thm.a4a}. 
Under our assumptions, we will prove that  $H(h)$  has the representation
\begin{equation}
H(h)=H(h_*)+G^*AG,
\label{c0}
\end{equation}
where $G$ is a strongly $H(h_*)$-smooth operator, in the terminology of \cite{Yafaev2},
and $A$ is a compact operator. 
Roughly speaking, this means that the difference $H(h)-H(h_*)$ can be represented 
as an operator with a  sufficiently 
regular integral kernel \emph{in the spectral representation of $H(h_*)$}. 
The proof that \eqref{c0} holds consists of two ingredients: a detailed analysis of the explicit
diagonalisation of $H(h_*)$ together with an identification of a class of $H(h_*)$-smooth operators, and
an (easy) proof of the compactness of the operator $A$.

By standard results of smooth scattering theory,
the representation \eqref{c0} implies the existence and completeness of wave operators 
for the pair of operators $(H(h),H(h_*))$, yielding the statement about the 
a.c. spectrum of $H(h)$. 
The same considerations of scattering theory also yield 
the absence of the singular continuous spectrum of $H(h)$.

\subsection{Diagonalization of $H(h_*)$}
Let $K_\nu(z)$ be the modified Bessel function of the third kind; 
for $\Re \nu>-1/2$ and $\Re z>0$ it is given by the integral 
representation \cite[Section 7.3.5, formula (15)]{BE}
\[
\Gamma(\nu+\tfrac12)K_\nu(z)
=
\sqrt{\pi}\bigl(\tfrac{z}{2}\bigr)^\nu
\int_1^\infty e^{-zu}(u^2-1)^{-\frac12+\nu}du. 
\label{c1}
\]
For $k>0$ and $t>0$, set
$$
\psi_k(t)=\frac1\pi \sqrt{2k\sinh(\pi k)}t^{-\frac12}K_{ik}(t/2).
$$
Formally, $\{\psi_k\}_{k> 0}$ gives a complete normalised set 
of generalised eigenfunctions of $H(h_*)$: 
$$
H(h_*)\psi_k=\lambda(k)\psi_k, \quad\text{ where }\quad \lambda(k)=\frac{\pi}{\cosh(\pi k)}, \quad k>0.
$$
More precisely, we have the following statement  \cite{Lebedev1,Lebedev2,Rosenblum,Yafaev1}.
\begin{proposition}
For $f\in C_\comp^\infty(\bbR_+)$, let 
$$
(Uf)(k)=\int_0^\infty f(t)\psi_k(t)dt, \quad k>0.
$$
Then $U$ extends to a unitary operator on $L^2(\bbR_+)$. 
For any $f\in L^2(\bbR_+)$, it holds that
\[
(UH(h_*)U^* f)(k)=\lambda(k)f(k), \quad k>0.
\label{c2}
\]
\end{proposition}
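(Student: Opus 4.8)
The plan is to realise $U$ as the spectral transform of an explicit Sturm--Liouville operator that commutes with $H(h_*)$. This splits the proof into two essentially independent parts: the diagonalisation identity \eqref{c2}, which follows from the commutation, and the unitarity of $U$, which is a Plancherel statement for that Sturm--Liouville operator.

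First I would introduce the differential operator $\mathcal{L}=-\frac{d}{dt}\,t^2\,\frac{d}{dt}+\frac{t^2}{4}$ on $L^2(\bbR_+)$. Starting from the modified Bessel equation satisfied by $K_{ik}$, a direct computation shows that $\psi_k$ solves $\mathcal{L}\psi_k=(\tfrac14+k^2)\psi_k$. An inspection of the indicial behaviour of solutions near the two endpoints (near $0$ one finds the two oscillatory exponents $t^{-1/2\pm ik}$, both non-square-integrable, while near $\infty$ there is exactly one exponentially decaying solution) shows that $\mathcal{L}$ is in the limit-point case at both endpoints, hence essentially self-adjoint with simple a.c. spectrum $[\tfrac14,\infty)$. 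The $\psi_k$ are thus its generalised eigenfunctions, and the correspondence $\mu=\tfrac14+k^2$ matches the single $k>0$ per spectral point with the expected multiplicity one.

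Next I would show that $H(h_*)$ commutes with $\mathcal{L}$. The crux is the purely algebraic identity $\mathcal{L}_t\Theta=\mathcal{L}_s\Theta$ for the kernel $\Theta(t,s)=h_*(t+s)=e^{-(t+s)/2}/(t+s)$, which one verifies by writing $\Theta$ as a function of $r=t+s$: the discrepancy is proportional to $(t-s)$ times a combination of $\Theta$, $\Theta'$, $\Theta''$ that cancels identically. Together with the decay of $\Theta$ and $\psi_k$, which kills the boundary terms arising in the integration by parts, this shows that $H(h_*)$ preserves the (one-dimensional, by simplicity of the spectrum) generalised eigenspaces of $\mathcal{L}$, so that $H(h_*)\psi_k=\lambda(k)\psi_k$ for a scalar $\lambda(k)$. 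To pin down $\lambda(k)$ I would compare the leading behaviour of both sides as $t\to\infty$: since $K_{ik}(t/2)\sim\sqrt{\pi/t}\,e^{-t/2}$ and $e^{-(t+s)/2}/(t+s)\sim t^{-1}e^{-t/2}e^{-s/2}$, matching the coefficients of $t^{-1}e^{-t/2}$ reduces the determination of $\lambda(k)$ to the single explicit Laplace transform $\int_0^\infty e^{-y}y^{-1/2}K_{ik}(y)\,dy=\sqrt{\pi/2}\,\pi/\cosh(\pi k)$, which yields $\lambda(k)=\pi/\cosh(\pi k)$. This gives \eqref{c2} on the dense set $C_\comp^\infty(\bbR_+)$ and then, by boundedness of $H(h_*)$, for all $f\in L^2(\bbR_+)$.

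Finally, the unitarity of $U$ is precisely the assertion that the spectral transform of $\mathcal{L}$, written in the variable $k$ via $\mu=\tfrac14+k^2$ and with eigenfunctions normalised as $\psi_k$, is unitary; equivalently, it is the Kontorovich--Lebedev Plancherel formula, which rests on the orthogonality and completeness relation $\int_0^\infty K_{ik}(x)K_{ik'}(x)\,\frac{dx}{x}=\frac{\pi^2}{2k\sinh(\pi k)}\,\delta(k-k')$. I expect this Plancherel identity --- that is, the computation of the spectral density of $\mathcal{L}$ together with the exact constant $\frac1\pi\sqrt{2k\sinh(\pi k)}$ --- to be the main obstacle, since the remaining steps are either algebraic or reduce to a tabulated integral. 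A secondary technical point is the rigorous treatment of the commutation for the unbounded operator $\mathcal{L}$ acting on the non-$L^2$ functions $\psi_k$, which is cleanest to phrase as an identity tested against $C_\comp^\infty(\bbR_+)$ functions rather than as an operator identity between genuine eigenfunctions.
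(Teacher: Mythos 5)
The paper offers no proof of this Proposition, citing instead \cite{Lebedev1,Lebedev2,Rosenblum,Yafaev1}, and your sketch is a correct reconstruction of exactly the argument in those sources: Yafaev's commutator method with the operator $\mathcal{L}=-\frac{d}{dt}t^2\frac{d}{dt}+\frac{t^2}{4}$ (for which the identity $\mathcal{L}_t h_*(t+s)=\mathcal{L}_s h_*(t+s)$, the eigenvalue equation $\mathcal{L}\psi_k=(\tfrac14+k^2)\psi_k$, the limit-point analysis, and the Laplace-transform evaluation giving $\lambda(k)=\pi/\cosh(\pi k)$ all check out), combined with the Kontorovich--Lebedev Plancherel theorem for unitarity. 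You correctly identify the latter, together with a careful justification of the commutation on generalised eigenfunctions, as the genuinely nontrivial inputs; both are classical and are precisely what the cited references supply.
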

 One reads off the spectrum of $H(h_*)$ from \eqref{c2}:
it is given by the closure of the range of $\lambda(k)$, $k>0$,  
which coincides with the interval $[0,\pi]$. 
Additionally, since $\lambda(k)$ is a continuous and strictly decreasing function of $k \geq 0$, $\lim_{k \to \infty} \lambda(k) = 0$, the spectrum is purely a.c. and has multiplicity one. More explicitly, the unitary operator $U_0:L^2(\bbR_+)\to L^2(0,\pi)$,
$$
(U_0f)(\lambda)=\bigl(\tfrac{d\lambda(k)}{dk}\bigr)^{-1/2}(Uf)(k), \quad \lambda=\lambda(k)\in(0,\pi),
$$
reduces $H(h_*)$ to the operator of multiplication by
the independent variable on $L^2(0,\pi)$,
$$
(U_0 H(h_*)U_0^*f)(\lambda)=\lambda f(\lambda), 
\quad \lambda\in(0,\pi).
$$

\subsection{Strong smoothness}
Let us fix the following function $q\in L^\infty(\bbR_+)$,
$$
q(t)=
\begin{cases}
\abs{\log t}^{-1}, & t\in(0,1/2),
\\
(\log 2)^{-1}, & t\geq 1/2.
\end{cases}
$$
Denote by $Q$ the operator of multiplication by $q(t)$ in $L^2(\bbR_+)$.
For $\beta>0$, we will also consider the power $Q^\beta$. 
In this subsection we will prove the following result. 
\begin{theorem}\label{thm.b2}
Let $\beta>1/2$. Then, in the terminology 
of \cite[Section 4.4]{Yafaev2}, $Q^\beta$ is strongly $H(h_*)$-smooth 
with any exponent $\gamma$ in the range $0<\gamma<\min\{1,\beta-1/2\}$
on any compact sub-interval of $(0,\pi)$.
\end{theorem}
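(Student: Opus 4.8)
The plan is to unwind the definition of strong $H(h_*)$-smoothness from \cite[Section 4.4]{Yafaev2} in the present multiplicity-one setting, reducing the assertion to a quantitative regularity statement about the generalised eigenfunctions $\psi_k$. Since $Q^\beta$ is self-adjoint, the relevant object is the fibre of $U_0Q^\beta$ over the energy $\lambda\in(0,\pi)$. Writing $\lambda=\lambda(k)$ and $c(k)=\abs{d\lambda(k)/dk}^{-1/2}$, one computes that this fibre is the linear functional $f\mapsto c(k)\int_0^\infty q(t)^\beta\psi_k(t)f(t)\,dt$ on $L^2(\bbR_+)$; as $\psi_k$ is real and the fibre space is one-dimensional, the functional is represented by the vector $\Phi_k:=c(k)q^\beta\psi_k$, provided the latter belongs to $L^2(\bbR_+)$. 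Strong $H(h_*)$-smoothness with exponent $\gamma$ on a compact subinterval of $(0,\pi)$ is then exactly the statement that $\lambda\mapsto\Phi_{k(\lambda)}$ is bounded and H\"older continuous of exponent $\gamma$ in the $L^2(\bbR_+)$-norm. Because $k\mapsto\lambda(k)=\pi/\cosh(\pi k)$ is smooth with non-vanishing derivative on $(0,\infty)$, a compact subinterval of $(0,\pi)$ corresponds to some $[a,b]\subset(0,\infty)$ in the variable $k$, the change of variables is bi-Lipschitz there, and $c(k)$ is smooth and bounded above and below on $[a,b]$. Everything therefore reduces to proving that $k\mapsto q^\beta\psi_k$ is $L^2$-bounded and H\"older-$\gamma$ continuous for $k\in[a,b]$.

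First I would record the uniform pointwise bounds on $\psi_k$ and $\partial_k\psi_k$ following from the integral representation \eqref{c1} (equivalently, from the classical small- and large-argument asymptotics of $K_{ik}$). On $[a,b]$ the factor $\sqrt{2k\sinh(\pi k)}$ is bounded above and below, and one has, uniformly in $k\in[a,b]$,
\begin{align*}
\abs{\psi_k(t)}&\leq Ct^{-1/2},\qquad \abs{\partial_k\psi_k(t)}\leq Ct^{-1/2}(1+\abs{\log t}),\qquad 0<t\leq1,\\
\abs{\psi_k(t)}+\abs{\partial_k\psi_k(t)}&\leq Ct^{-1}e^{-t/2},\qquad t\geq1.
\end{align*}
The extra factor $\abs{\log t}$ in the $k$-derivative near the origin is the crucial feature: it reflects that the leading small-$t$ behaviour is $\psi_k(t)=-\sqrt{2/\pi}\,t^{-1/2}\sin\bigl(k\log(t/4)-\arg\Gamma(1+ik)\bigr)+o(t^{-1/2})$, whose differentiation in $k$ brings down the phase $\log(t/4)$. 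Granting these bounds, the membership $q^\beta\psi_k\in L^2(\bbR_+)$ is immediate: the tail $t\geq1$ is harmless, while near the origin $q(t)^{2\beta}\abs{\psi_k(t)}^2\leq C\abs{\log t}^{-2\beta}t^{-1}$ and, with $u=\abs{\log t}$,
\begin{equation*}
\int_0^{1/2}\abs{\log t}^{-2\beta}\,\frac{dt}{t}=\int_{\log2}^\infty u^{-2\beta}\,du<\infty\quad\Longleftrightarrow\quad\beta>\tfrac12 .
\end{equation*}
This is precisely where the hypothesis $\beta>1/2$ is used, and the resulting bound is uniform in $k\in[a,b]$.

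For the H\"older continuity I would estimate $\norm{q^\beta(\psi_k-\psi_{k'})}_{L^2}^2$, the contribution of $t\geq1/2$ being $O(\abs{k-k'}^2)$ by the decay bounds. On $(0,1/2)$, where $q(t)=\abs{\log t}^{-1}$, I split the integral at a threshold $\delta$ and balance two estimates. On $(\delta,1/2)$ the mean value theorem together with $\abs{\partial_k\psi_k(t)}\leq Ct^{-1/2}\abs{\log t}$ gives, after the substitution $u=\abs{\log t}$, a bound $C\abs{k-k'}^2\abs{\log\delta}^{3-2\beta}$ when $\beta<3/2$; on $(0,\delta)$ the uniform $L^2$ bound gives $C\abs{\log\delta}^{1-2\beta}$. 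Choosing $\abs{\log\delta}=\abs{k-k'}^{-1}$ equalises the two terms and yields
\begin{equation*}
\norm{q^\beta(\psi_k-\psi_{k'})}_{L^2}^2\leq C\bigl(\abs{k-k'}^2+\abs{k-k'}^{2\beta-1}\bigr),
\end{equation*}
that is, H\"older continuity with exponent $\beta-\tfrac12$, hence with any $\gamma<\beta-\tfrac12$. When $\beta\geq3/2$ the integral $\int_\delta^{1/2}\abs{\log t}^{2-2\beta}t^{-1}\,dt$ converges as $\delta\to0$, the estimate on $(\delta,1/2)$ is already Lipschitz, and one obtains any exponent up to (and, at $\beta=3/2$, arbitrarily close to) $1$. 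Together these cover every $\gamma<\min\{1,\beta-\tfrac12\}$, as required.

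I expect the main obstacle to be the second step: establishing the uniform pointwise bounds on $\psi_k$ and especially on $\partial_k\psi_k$ near $t=0$, with the correct logarithmic factor, with uniformity over $k\in[a,b]$, and with the asymptotic expansion differentiable in $k$. Once these bounds are in hand, the $L^2$-membership and the split-and-balance argument for H\"older continuity are routine, and the passage back to the energy variable $\lambda$ is harmless.
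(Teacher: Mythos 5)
Your proposal is correct and follows essentially the same route as the paper: pass to the variable $k$ via the diffeomorphism $\lambda=\lambda(k)$, split the $t$-integration at $t=1/2$, dispose of the tail using the exponential decay of $K_{ik}$, and near $t=0$ isolate the oscillatory factor $(t/4)^{\pm ik}$ as the only non-smooth dependence on $k$, so that the weight $q(t)^{2\beta}=\abs{\log t}^{-2\beta}$ makes $\int_0^{1/2}\abs{\log t}^{-2\beta}\,dt/t$ converge exactly when $\beta>1/2$. The one place where you diverge is the extraction of H\"older continuity in $k$ near the origin: the paper applies the pointwise inequality $\abs{e^{ia}-e^{ib}}\leq 2\abs{a-b}^\gamma$ directly to the phase, obtaining $\abs{K_{ik}(t/2)-K_{ik'}(t/2)}\leq C\abs{k-k'}^\gamma\abs{\log t}^\gamma$ and hence the integral estimate for every $\gamma<\beta-1/2$ in one step, whereas you use the full derivative bound $\abs{\partial_k\psi_k(t)}=O(t^{-1/2}\abs{\log t})$ together with a split-and-balance interpolation in $t$; the two devices give the same range of exponents, and the uniform bounds on $\psi_k$ and $\partial_k\psi_k$ that you flag as the main obstacle are exactly what the paper establishes from the series representation of $K_{ik}$ for small $t$ (where the $k$-dependence of $\wt I_{\pm ik}$ is manifestly smooth) and from the integral representation \eqref{c1} for large $t$. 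The only cosmetic slip is that for $t\geq1$ the $k$-derivative of the prefactor $(t/4)^{ik}$ contributes an extra factor $\log t$ not present in your stated tail bound, but this is harmlessly absorbed by weakening $e^{-t/2}$ to $e^{-t/4}$, as the paper does.
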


The strong smoothness here means the following. Let $\delta\subset(0,\pi)$ 
be a compact interval and let $\gamma$ be as in the theorem. Then there exists
a constant $C=C(\delta,\gamma)$ such that for any $f\in L^2(\bbR_+)$
we have the estimates
\begin{gather*}
\abs{(U_0Q^\beta f)(\lambda)}\leq C\norm{f}, \quad \lambda\in\delta,
\\
\abs{(U_0Q^\beta f)(\lambda)-(U_0Q^\beta f)(\lambda')}\leq C\abs{\lambda-\lambda'}^\gamma\norm{f}, 
\quad \lambda,\lambda'\in\delta.
\end{gather*}
In other words, the linear functional 
$$
L^2(\bbR_+)\ni f\mapsto (U_0Q^\beta f)(\lambda)
$$
is norm-H\"older continuous in $\lambda\in\delta$ with the exponent $\gamma$. 

\begin{proof}[Proof of Theorem~\ref{thm.b2}]
Since $\lambda=\lambda(k)$ is a $C^\infty$-smooth function of $k>0$ 
and the derivative $\lambda'(k)$ does not vanish for $k>0$, it suffices to prove that, 
for any compact interval $\Delta\subset (0,\infty)$ and for any $\gamma<\min\{1,\beta-1/2\}$, 
the linear functional 
$$
L^2(\bbR_+)\ni f\mapsto (UQ^\beta f)(k)
$$
is norm-H\"older continuous in $k\in\Delta$ with the exponent $\gamma$. 
Recalling the formula for $U$, we see that this functional is given by 
$$
f\mapsto \int_0^\infty f(t)  \psi_k(t) q(t)^\beta dt, \quad k>0.
$$
Thus, we need  to prove the estimates
\begin{gather*}
\int_0^\infty \abs{\psi_k(t)}^2 q(t)^{2\beta} dt\leq C, \quad k\in\Delta,
\\
\int_0^\infty \abs{\psi_k(t)-\psi_{k'}(t)}^2 q(t)^{2\beta} dt\leq C\abs{k-k'}^{2\gamma}, \quad k,k'\in\Delta.
\end{gather*}
By the explicit form of $\psi_k$, it suffices to prove the estimates 
\begin{gather}
\int_0^\infty \abs{K_{ik}(t/2)}^2 t^{-1}q(t)^{2\beta} dt\leq C, \quad k\in\Delta,
\label{c10}
\\
\int_0^\infty \abs{K_{ik}(t/2)-K_{ik'}(t/2)}^2 t^{-1} q(t)^{2\beta} dt\leq C\abs{k-k'}^{2\gamma}, \quad k,k'\in\Delta.
\label{c11}
\end{gather}
Let us split the domain of integration in \eqref{c10} and \eqref{c11} into $(0,1/2)$ 
and $(1/2,\infty)$ and estimate the corresponding integrals separately. 

Consider first the integrals over $(0,1/2)$. We recall the following representation 
for the modified Bessel function $K_\nu$: 
$$
K_\nu(z)=\frac{\pi}{2\sin(\nu\pi)}(I_{-\nu}(z)-I_\nu(z)), 
$$
where $I_\nu(z)$ is the modified Bessel function of the first kind, given by the convergent series
$$
I_\nu(z)
=
\sum_{m=0}^\infty \frac{(z/2)^{2m+\nu}}{m!\Gamma(m+\nu+1)}. 
$$
For $k>0$, let us write 
$$
I_{\pm ik}(z)=(z/2)^{\pm ik}\wt I_{\pm ik}(z), \quad
\wt I_{\pm ik}(z)=\sum_{m=0}^\infty \frac{(z/2)^{2m}}{m!\Gamma(m\pm ik+1)}. 
$$
By inspection, both $\wt I_{\pm ik}(z)$ and $(d/dk)\wt I_{\pm ik}(z)$
are entire functions of $z$, bounded uniformly for $z\in(0,1)$ and $k\in \Delta$. 
Using the elementary estimate $\abs{e^{ia}-e^{ib}}\leq 2\abs{a-b}^\gamma$ for  $0<\gamma<1$,
we get 
$$
\abs{(z/2)^{ik}-(z/2)^{ik'}}
=
\abs{e^{ik\log(z/2)}-e^{ik'\log(z/2)}}
\leq 
2 \abs{k-k'}^\gamma\abs{\log (z/2)}^\gamma. 
$$
Using this, 
we obtain the estimates
\begin{align*}
\abs{K_{ik}(t/2)}&\leq C, 
\quad k\in \Delta, \quad t\in (0,1/2),
\\
\abs{K_{ik}(t/2)-K_{ik'}(t/2)}&\leq
C\abs{k-k'}^\gamma \abs{\log t}^\gamma, 
\quad k,k'\in \Delta, \quad t\in (0,1/2).
\end{align*}
Now it is clear that for $\beta>\gamma+1/2$ the estimates
\eqref{c10} and \eqref{c11} hold with the integrals taken over $(0,1/2)$. 

Next, let us consider the integrals over $(1/2,\infty)$. 
Here we use the integral representation \eqref{c1} for $K_\nu$.
Let us rewrite it as follows,
$$
K_{ik}(t/2)
=\frac{\sqrt{\pi}}{\Gamma(ik+\tfrac12)}(t/4)^{ik}
\int_1^\infty e^{-tu/2}(u^2-1)^{-\frac12+ik}du. 
$$
For $t\geq1/2$, one can estimate the exponential in the integral for $K_{ik}$ as
$$
e^{-tu/2}=e^{-tu/4}e^{-tu/4}\leq e^{-t/4} e^{-u/8}.
$$
This allows one to conclude that $K_{ik}$ satisfies
$$
\abs{K_{ik}(t/2)}+\abs{(d/dk) K_{ik}(t/2)}\leq C \abs{\log (t/4)}e^{-t/4}, \quad t>1/2, \quad k\in \Delta.
$$
Thus, we obtain the estimates \eqref{c10} and \eqref{c11} with the integrals taken 
over $(1/2,\infty)$. Here we do not need any restrictions on $\gamma$ and $\beta$. 
\end{proof}

\subsection{Putting the results together}

We use the following statement from scattering theory. 
For the proof and further details, see \cite[Section~4.7]{Yafaev2}.

\begin{proposition}\label{prp.c3}
Let $H_0$ be a bounded self-adjoint operator in a Hilbert space. 
Assume that the spectrum of $H_0$ is purely a.c., has constant 
multiplicity $m$, and coincides with the interval $[a,b]$. 
Let $G$ be a bounded operator, which is strongly $H_0$-smooth
with an exponent $\gamma>1/2$ on any compact sub-interval of $(a,b)$. 
Let 
$$
H=H_0+G^* AG, 
$$
where $A$ is a compact self-adjoint operator. Then
\begin{enumerate}[\rm (i)]
\item
the a.c. spectrum of $H$ coincides with $[a,b]$ and has a constant
multiplicity $m$,
\item
$H$ has no singular continuous spectrum,
\item
and all eigenvalues of $H$, distinct from $a$ and $b$, have finite 
multiplicities and can accumulate only to $a$ and $b$. 
\end{enumerate}
\end{proposition}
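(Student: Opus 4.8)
The statement is a standard output of the smooth method of scattering theory, and my plan is to manufacture a limiting absorption principle for both $H_0$ and $H$ out of the strong smoothness hypothesis and then to read off (i)--(iii). Write $R_0(z)=(H_0-z)^{-1}$, $R(z)=(H-z)^{-1}$, $M_0(z)=GR_0(z)G^*$ and $M(z)=GR(z)G^*$. In the spectral representation $U_0$ diagonalising $H_0$ (multiplication by $\lambda$ on $L^2((a,b);\bbC^m)$), strong $H_0$-smoothness with exponent $\gamma>1/2$ means that the trace maps $Z(\lambda)=(U_0G^*)(\lambda)$, with values in the fibre $\bbC^m$, are bounded and depend H\"older-continuously on $\lambda$ on each compact subinterval of $(a,b)$. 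Inserting this into the Cauchy--Stieltjes representation
$$
M_0(\lambda\pm i0)=\mathrm{p.v.}\!\int_a^b\frac{Z(\mu)^*Z(\mu)}{\mu-\lambda}\,d\mu\pm i\pi\,Z(\lambda)^*Z(\lambda),
$$
Privalov's theorem on the H\"older continuity of Cauchy-type singular integrals shows that $z\mapsto M_0(z)$ extends from each half-plane to a norm-continuous (indeed locally H\"older) operator-valued function up to $(a,b)$; this is the limiting absorption principle for $H_0$.

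Next I would run a Birman--Schwinger analysis. Since $G^*AG$ is compact, Weyl's theorem gives $\sigma_{\ess}(H)=[a,b]$. The second resolvent identity, sandwiched by $G$ and $G^*$, yields $M(z)=M_0(z)-M_0(z)AM(z)$ for $\Im z\neq0$, hence
$$
M(z)=\bigl(I+M_0(z)A\bigr)^{-1}M_0(z),
$$
the inverse existing for $\Im z\neq0$ because $H-z$ is then boundedly invertible. As $A$ is compact and $M_0(\lambda\pm i0)$ is norm-continuous, $D(\lambda\pm i0):=I+M_0(\lambda\pm i0)A$ is a norm-continuous family of index-zero Fredholm operators, so the exceptional set $\mathcal E=\{\lambda\in(a,b):\Ker D(\lambda+i0)\neq\{0\}\}$ (the $+i0$ and $-i0$ versions coinciding by self-adjointness) consists of points with finite-dimensional kernel. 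Off $\mathcal E$ the boundary values $M(\lambda\pm i0)=D(\lambda\pm i0)^{-1}M_0(\lambda\pm i0)$ exist and are locally H\"older: this is the limiting absorption principle for $H$.

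I would then read off (i) and (ii). The same smoothness underlying both absorption principles yields, by the standard theorem of smooth perturbation theory, the existence and completeness of the local wave operators $W_\pm(H,H_0;E_0(\delta))$ for every compact $\delta\subset(a,b)$, hence of the wave operators on the whole a.c. subspace; completeness identifies the a.c. parts unitarily, so $\sigma_{\ac}(H)=[a,b]$ with the same constant multiplicity $m$, which is (i). The H\"older boundary values of $M(\lambda\pm i0)$ off the closed null set $\mathcal E$ make $G$ locally $H$-smooth there, and the standard criteria of the smooth method then exclude any singular continuous spectrum in $(a,b)$; combined with $\sigma_{\ess}(H)=[a,b]$ this gives (ii).

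The delicate heart, and what I expect to be the main obstacle, is (iii): showing that $\mathcal E$ is discrete in $(a,b)$ and consists of genuine eigenvalues. The eigenvalue identification proceeds by an imaginary-part argument: if $D(\lambda+i0)\phi=0$, put $\eta=A\phi$, so that $\phi=-M_0(\lambda+i0)\eta$; pairing with $\eta$ and taking imaginary parts, using $(A\phi,\phi)\in\bbR$, gives $\pi\norm{Z(\lambda)\eta}^2=0$, whence $Z(\lambda)\eta=0$. This vanishing of the energy-shell component cancels the boundary singularity of $R_0(\lambda+i0)$, so $u=R_0(\lambda+i0)G^*\eta$ is a genuine vector of the Hilbert space; a direct computation using $Gu=M_0(\lambda+i0)\eta=-\phi$ and $\eta=A\phi$ gives $(H-\lambda)u=0$. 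Finite multiplicity follows because $u\mapsto Gu$ injects $\Ker(H-\lambda)$ into the finite-dimensional $\Ker D(\lambda+i0)$ (injective since $Gu=0$ would force $u\in\Ker(H_0-\lambda)=\{0\}$). The genuinely hard point is discreteness: mere norm-continuity of $D(\cdot+i0)$ does not preclude a non-discrete non-invertibility set, so I expect to need the analyticity of $z\mapsto D(z)$ in the half-plane together with the boundary H\"older regularity --- the step where the strict inequality $\gamma>1/2$ is used --- to rule out accumulation of $\mathcal E$ inside $(a,b)$. This is precisely the analysis carried out in \cite[Section~4.7]{Yafaev2}, and granting it, (iii) follows.
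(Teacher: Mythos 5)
The paper offers no proof of this proposition at all: it is quoted as a known result, with the reader referred to \cite[Section~4.7]{Yafaev2}. Your outline is, in substance, a faithful reconstruction of the argument in that reference --- limiting absorption principle for $M_0(z)=GR_0(z)G^*$ via Privalov, the Birman--Schwinger identity $M=(I+M_0A)^{-1}M_0$, Kato/local-smoothness giving existence and completeness of wave operators, and the imaginary-part trick identifying the exceptional set with eigenvalues --- so there is no divergence of method to report, only the observation that you have filled in what the authors chose to cite. Two points deserve correction or emphasis. First, the hypothesis $\gamma>1/2$ is already indispensable at the eigenvalue-identification step, not only for discreteness: the vector $u=R_0(\lambda+i0)G^*\eta$ belongs to the Hilbert space precisely because $\abs{Z(\mu)\eta}=\abs{Z(\mu)\eta-Z(\lambda)\eta}\leq C\abs{\mu-\lambda}^{\gamma}$ makes $Z(\mu)\eta/(\mu-\lambda)$ square-integrable near $\lambda$, which requires $2(\gamma-1)>-1$, i.e. $\gamma>1/2$.

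Second, and more importantly, your derivation of (ii) is logically incomplete as ordered. Local $H$-smoothness of $G$ off the exceptional set $\mathcal{E}$ shows that the spectrum in $(a,b)\setminus\mathcal{E}$ is purely absolutely continuous, but a closed set of Lebesgue measure zero can perfectly well support a singular continuous measure; to exclude singular continuous spectrum one must know that $\mathcal{E}$ is countable (in fact discrete in $(a,b)$), so that $E_H(\mathcal{E})$ is spanned by eigenvectors. In other words, (ii) rests on exactly the ``delicate heart'' you isolate in (iii) and defer to \cite{Yafaev2}; it cannot be obtained before it. With that dependence made explicit, and granting the discreteness analysis of \cite[Section~4.7]{Yafaev2} as both you and the paper do, the outline is correct.
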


We will not need part (iii) of this proposition, since the absence of eigenvalues
is proven in Section~\ref{sec.b} by a different method.

\begin{proof}[Proof of Theorem~\ref{thm.a4a}]
We fix $\beta>1$ and write 
$$
H(h)=H(h_*)+Q^\beta AQ^\beta,
$$
where $A$ is the integral operator with the integral kernel 
$$
a(t,s)=q(t)^{-\beta}(h(t+s)-h_*(t+s))q(s)^{-\beta}, \quad t,s\in\bbR_+.
$$
In view of  \eqref{a8} it is easy to see that $a\in L^2(\bbR_+\times\bbR_+)$ and 
so $A$ is Hilbert-Schmidt, hence compact. 
Now it remains to use  Proposition~\ref{prp.c3} with $H=H(h)$, $H_0=H(h_*)$ and $G=Q^\beta$. 
\end{proof}

\section{Absence of embedded eigenvalues: proof of Theorem~\ref{thm.a5}}\label{sec.b}

\subsection{Preliminaries}

The key element of the proof  is the following lemma.

\begin{lemma}\label{lma.b3}
Let $H(h)$ be as in Theorem~\ref{thm.a5}. 
If $H(h)f=Ef$ for some function $f\in L^2(\bbR_+)$ and a constant $E$, $0<E\leq\pi$, then 
$f'\in L^2(\bbR_+)$ and $f(0)=0$. 
\end{lemma}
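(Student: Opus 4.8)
The plan is to differentiate the eigenvalue equation and reduce everything to the behaviour of $f$ at the two ends $t\to\infty$ and $t\to0$. Writing the eigenvalue equation as $Ef(t)=\int_0^\infty h(t+s)f(s)\,ds$ and using that $h\in C^2((0,\infty))$ by \eqref{a9}, a routine bootstrap gives $f\in C^1((0,\infty))$ with $Ef'(t)=\int_0^\infty h'(t+s)f(s)\,ds$ for $t>0$. Integrating by parts in $s$, via $h'(t+s)=\partial_s h(t+s)$, then yields the pointwise identity $(E+H(h))f'=-f(0^+)h$ on $(0,\infty)$, \emph{provided} the boundary value $f(0^+):=\lim_{t\to0^+}f(t)$ exists (the contribution at $s=\infty$ vanishes since $h$ and $f$ decay). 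Isolating the Carleman part $h=1/t+\wt h$, this reads $(E+H(1/t))f'=H(\wt h')f-f(0^+)/t$, where $H(\wt h')f\in L^2(\bbR_+)$ because $\abs{\wt h'(u)}\leq C/u$ by \eqref{a9} and Lemma~\ref{lma.d1}. The crucial structural point is that $H(1/t)\geq0$, so $E+H(1/t)\geq E>0$ is boundedly invertible; the \emph{only} obstruction to $f'\in L^2(\bbR_+)$ is the singular term $f(0^+)/t$, which is square-integrable near the origin if and only if $f(0^+)=0$.

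The behaviour at infinity is the easy direction. By \eqref{a9} we have $\abs{h'(u)}\leq Cu^{-2}$ for large $u$, so Cauchy--Schwarz applied to $Ef'(t)=\int_0^\infty h'(t+s)f(s)\,ds$ gives $\abs{f'(t)}\leq Ct^{-3/2}\norm{f}$ as $t\to\infty$, whence $f'\in L^2$ away from the origin; the same estimate applied to $f$ gives $f(t)=O(t^{-1/2})$. It remains to treat the origin.

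The origin is the heart of the matter, and I expect it to be the main obstacle. One must first show that $f(0^+)$ exists at all: Cauchy--Schwarz only yields $\abs{f(t)}\leq Ct^{-1/2}$ as $t\to0$, which permits borderline behaviour such as $t^{-1/2}(\log(1/t))^{-1}$ that is square-integrable yet has no limit at $0$. This is precisely the behaviour of the generalized eigenfunctions $t^{-1/2\pm i\xi_0}$ of the Carleman operator at the energy $E=\pi/\cosh(\pi\xi_0)$, and it must be excluded. This is cleanest in the Mellin representation of Section~\ref{sec.b3}, in which $H(1/t)$ becomes multiplication by $\pi/\cosh(\pi\xi)$: the equation becomes $\widehat f(\xi)=\bigl(E-\pi/\cosh(\pi\xi)\bigr)^{-1}\widehat{H(\wt h)f}(\xi)$, and since the symbol $E-\pi/\cosh(\pi\xi)$ vanishes only at the real points where $\pi/\cosh(\pi\xi)=E$, the requirement $f\in L^2(\bbR_+)$, i.e. $\widehat f\in L^2$, forces $\widehat{H(\wt h)f}$ to vanish at those points, so that $\widehat f$ is regular there and $f$ is, in particular, bounded near $0$ with a limit $f(0^+)$. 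Extracting this regularity at the origin from the $L^2$ constraint---the removal of the resonant contribution---is the delicate step.

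Once $f(0^+)$ is known to exist, its vanishing follows from a short asymptotic computation: isolating the singular kernel, $Ef(t)=\int_0^1\frac{f(s)}{t+s}\,ds+O(1)=-f(0^+)\log t+O(1)$ as $t\to0$, which is incompatible with $f$ having a finite limit at $0$ unless $f(0^+)=0$. With $f(0^+)=0$ the identity of the first paragraph becomes $(E+H(1/t))f'=H(\wt h')f\in L^2(\bbR_+)$, and inverting the positive operator $E+H(1/t)$ gives $f'\in L^2$ near the origin; the inverse image is unique because $(E+H(1/t))\phi=0$ has no nonzero solution, even generalized, as $\pi/\cosh(\pi\xi)+E\geq E>0$ never vanishes. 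Combined with the decay at infinity this yields $f'\in L^2(\bbR_+)$, and together with $f(0)=f(0^+)=0$ this completes the proof.
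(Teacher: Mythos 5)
Your proposal correctly locates the difficulty (infinity is easy, the origin is the crux), and your treatment of $t\to\infty$ via $h'(u)=O(u^{-2})$ and Cauchy--Schwarz is exactly the paper's. But there is a genuine gap at the origin, and it sits precisely at the step you yourself flag as ``delicate'': the existence (and then vanishing) of $f(0^+)$. Your proposed mechanism --- that the $L^2$ constraint forces $\widehat{H(\wt h)f}$ to vanish at the real zeros of $E-\pi/\cosh(\pi\xi)$, whence $\widehat f$ is regular there and ``$f$ is, in particular, bounded near $0$ with a limit'' --- is a non sequitur. Regularity and even rapid decay of $\calM f$ \emph{on the critical line} $\Re z=1/2$ cannot give more than $\abs{f(s)}\leq s^{-1/2}\norm{\calM f}_{L^1}$; functions behaving like $s^{-1/2}$ times a bounded oscillation near $0$ have perfectly smooth Mellin transforms on that line. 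To beat the exponent $-1/2$ one must continue $\calM f$ \emph{across} the critical line and shift the inversion contour to $\Re z=\sigma<0$ (indeed $\sigma<-1/2$ to get $f'\in L^2(0,1)$). That continuation is the actual content of the paper's proof: using $\wt h\in C^2$ and two integrations by parts, $\calM(H(\wt h)f)$ extends meromorphically to the strip $-\tfrac12-\eps<\Re z<\tfrac12+\eps$ with an $L^1$ bound on vertical lines (Lemma~\ref{lma.b4}); then $\calM f=-\calM(H(\wt h)f)/u$ with $u(z)=\pi/\sin(\pi z)-E$ is shown to be \emph{analytic} in that strip --- the pole of $\calM(H(\wt h)f)$ at $z=0$ is cancelled by the pole of $u$, and the zeros of $u$ on $\Re z=1/2$ (your resonances) are ruled out by Plancherel (Lemma~\ref{lma.b5}). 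Shifting the contour to $\sigma$ slightly below $-1/2$ then gives $f(s)=O(s^{-\sigma})$ and $f'(s)=O(s^{-\sigma-1})$ in one stroke, yielding both conclusions. None of this two-sided continuation appears in your sketch.

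A second, related problem is circularity in your route to $f'\in L^2$ near the origin. The identity $(E+H(1/t))f'=H(\wt h')f-f(0^+)/t$ is obtained by integrating by parts against $f'$, which presupposes that $\int_0^1 \abs{f'(s)}(t+s)^{-1}ds$ converges; a priori $f'$ could be as bad as $s^{-3/2}$ near $0$, in which case $H(1/t)f'$ is not defined and the identity is meaningless. So you cannot ``invert $E+H(1/t)$'' to conclude $f'\in L^2$: the equation you would invert is only available once you already know $f'$ is close to $L^2$ near $0$. (The paper uses the analogous integration by parts only \emph{after} Lemma~\ref{lma.b3} is proved, in the deduction of Theorem~\ref{thm.a5}, where $f(0)=0$ and $f'\in L^2$ are already known.) Your self-consistency argument that a finite nonzero limit $f(0^+)$ would force $Ef(t)\sim -f(0^+)\log t$ is correct and pleasant, but it only becomes usable after the existence of the limit is established --- which is the missing step.
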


Before embarking on the proof of Lemma~\ref{lma.b3}, let us show how it leads to a proof of
Theorem~\ref{thm.a5}. 

\begin{proof}[Proof of Theorem~\ref{thm.a5}]
Differentiating the eigenvalue equation $H(h)f=Ef$, we get
\[
\int_0^\infty h'(t+s)f(s)ds=Ef'(t), \quad t>0.
\label{b1a}
\]
Integrating by parts, we obtain 
$$
H(h)f'=-Ef';
$$
the boundary term at zero vanishes because $f(0)=0$ by Lemma~\ref{lma.b3}.
This means that $-E$ is an eigenvalue of $H(h)$ with the eigenfunction $f'$, 
and $f'\in L^2(\bbR_+)$ by Lemma~\ref{lma.b3}. 
Since  $H(h)\geq0$, 
this is impossible unless $f'\equiv0$, which implies that $f\equiv0$. 
\end{proof}

\subsection{Lemmas on the Mellin transform}\label{sec.b3}

The remainder of this section is devoted to proving Lemma~\ref{lma.b3}. 
Our main tool, following \cite{BPSSV}, is the Mellin transform. 
See for example \cite{Titchmarsh} for the background. For $f\in L^2(\bbR_+)$, the 
Mellin transform is defined by 
$$
\calM f(z)=\int_0^\infty s^{z-1}f(s)ds,  \quad z\in\bbC,
$$
as long as the integral converges. 
It can be shown that the Mellin transform, initially defined on a suitable dense subset
of $L^2(\bbR_+)$, extends to an isometry between $L^2(\bbR_+)$ and
the $L^2$ space on the vertical line $\Re z=1/2$. In other words, the Plancherel identity 
\[
\frac1{2\pi}\int_{-\infty}^\infty \abs{\calM f(\tfrac12+i\tau)}^2 d\tau
=
\int_0^\infty \abs{f(s)}^2ds, \quad f\in L^2(\bbR_+)
\label{b2}
\]
holds. 
In this context, the inversion formula for the Mellin transform reads as
\[
f(s)=\frac1{2\pi i}\int_{\frac12-i\infty}^{\frac12+i\infty}s^{-z}\calM f(z)dz, \quad f\in L^2(\bbR_+).
\label{b3}
\]
The Mellin transform is useful to us because it diagonalises the 
Carleman operator $H(1/t)$. More precisely, we have the identity
\[
\calM H(1/t)f(z)=\frac{\pi}{\sin(\pi z)}\calM f(z), \quad \Re z=1/2,\quad f\in L^2(\bbR_+),
\label{b3a}
\]
which is the consequence of the elementary formula
$$
\int_0^\infty \frac{s^{z-1}}{t+s}ds=\frac{\pi}{\sin(\pi z)}t^{z-1}, \quad 0<\Re z<1.
$$

\begin{lemma}\label{lma.b4}
Let $h$, $\wt h$ be as in Theorem~\ref{thm.a5}, and let $g=H(\wt h)f$ for some 
$f\in \Ran H(h)$. 
Then the Mellin transform $\calM g(z)$ extends to a meromorphic
function in the strip 
\[
-\frac12-\eps<\Re z<\frac12+\eps
\label{b4}
\]
where $0 < \eps < 1/2$ is as in \eqref{a8}.
This meromorphic extension has at most one pole in the strip; this pole is simple and is located
at the origin.
The function $\calM g$ satisfies the estimate
\[
\int_{-\infty}^\infty \abs{ (\sigma+i\tau)\calM g(\sigma+i\tau)}d\tau\leq C(\eps'),
\quad
-\frac12-\eps'<\sigma <\frac12+\eps',
\label{b5}
\]
with any $0 < \eps' < \eps$. 
\end{lemma}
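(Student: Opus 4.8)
The plan is to study $\calM g$ by analysing the two ends of $g(t)=H(\wt h)f(t)$ separately: the behaviour as $t\to0$ controls the meromorphic continuation into $\Re z<\tfrac12$ and produces the pole at the origin, while the behaviour as $t\to\infty$ controls the continuation into $\Re z>\tfrac12$. Throughout I would exploit the decomposition $g=H(h)f-H(1/t)f$ together with the diagonalisation \eqref{b3a} of the Carleman operator, $\calM(H(1/t)f)(z)=\frac{\pi}{\sin\pi z}\calM f(z)$.

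First I would handle the region to the left of the critical line. Since $\wt h\in C^2([0,\infty))$ it is bounded near the origin, and the hypotheses \eqref{a8}, \eqref{a9} give $\abs{\wt h^{(k)}(u)}\le C/u$ for all $u>0$ and $k=0,1,2$; in particular $\wt h\in L^2(\bbR_+)$. Writing $g(t)-g(0)=\int_0^\infty(\wt h(t+s)-\wt h(s))f(s)\,ds$ and using $\abs{\wt h(t+s)-\wt h(s)}\le t\sup_{[s,s+t]}\abs{\wt h'}$ with Cauchy--Schwarz, I would show that $g$ is Lipschitz up to $t=0$, with $g(0)=\int_0^\infty\wt h(s)f(s)\,ds$ finite. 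Hence $\calM g(z)-g(0)/z$ is analytic for $\Re z>-1$, while the crude bound $\abs{g(t)}\le\norm{\wt h(t+\cdot)}\,\norm{f}=O(t^{-1/2})$ gives convergence of the defining integral at infinity for $\Re z<\tfrac12$. This already produces a meromorphic continuation to $-1<\Re z<\tfrac12$ whose only singularity is a simple pole at $z=0$ with residue $g(0)$.

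The decisive step is the continuation past the critical line \emph{to the right}, where the crude decay $O(t^{-1/2})$ of $g$ is insufficient; I expect this to be the main obstacle. This is exactly where the hypothesis $f\in\Ran H(h)$ is used: writing $f=H(h)u$ with $u\in L^2(\bbR_+)$ and invoking the second bound in \eqref{a8}, $\abs{h(t)}\le Ct^{-1-\eps}$, one gets $\abs{f(t)}\le(\int_t^\infty\abs{h}^2)^{1/2}\norm{u}=O(t^{-1/2-\eps})$, and likewise $\abs{H(h)f(t)}=O(t^{-1/2-\eps})$. Thus both $\calM f$ and $\calM(H(h)f)$ extend analytically into the strip $\tfrac12<\Re z<\tfrac12+\eps$ (the $O(t^{-1/2})$ growth of $f$ and $H(h)f$ at the origin being harmless there since $\Re z>\tfrac12$). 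The identity $\calM g=\calM(H(h)f)-\frac{\pi}{\sin\pi z}\calM f$, which persists by analytic continuation, then continues $\calM g$ to the right, because $\frac{\pi}{\sin\pi z}$ is analytic on $\tfrac12<\Re z<\tfrac12+\eps$ (its nearest pole is at $z=1$, outside the strip as $\eps<\tfrac12$). Gluing the two half-strips yields a meromorphic $\calM g$ on all of \eqref{b4} whose sole pole is the simple one at the origin. The conceptual point, which is the crux, is that the range condition is precisely what upgrades the decay of $f$ from $t^{-1/2}$ to $t^{-1/2-\eps}$.

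Finally, for \eqref{b5} the factor $z$ is present to cancel the pole, so that $z\calM g(z)$ is analytic across the strip. On the portion $\Re z\ge\tfrac12$ I would estimate the two terms of $\calM g=\calM(H(h)f)-\frac{\pi}{\sin\pi z}\calM f$ separately, using that $\abs{\pi/\sin\pi z}$ decays like $e^{-\pi\abs{\Im z}}$ on vertical lines while $\calM f$ and $\calM(H(h)f)$ are square-integrable there (Mellin--Plancherel in the weighted spaces provided by the $t^{-1/2-\eps}$ decay), so Cauchy--Schwarz delivers an $L^1$ bound in $\Im z$. On the part $\Re z<\tfrac12$ I would instead use regularity: the bounds $\abs{\wt h^{(k)}(u)}\le C/u$ give $g,g',g''\in L^2(\bbR_+)$, and the decay estimates $g'(t)=O(t^{-3/2})$, $g''(t)=O(t^{-5/2})$ place $g,g',g''$ in the relevant weighted $L^2$ spaces, so that $(1+\abs{\Im z}^2)\calM g(z)$ is square-integrable on each vertical line; writing $z=\sigma+i\tau$, the elementary inequality
$\int_{-\infty}^\infty\abs{z\,\calM g(z)}\,d\tau\le C\bigl(\int(1+\tau^2)^2\abs{\calM g}^2\,d\tau\bigr)^{1/2}\bigl(\int(1+\tau^2)^{-1}\,d\tau\bigr)^{1/2}$
then yields the claim, after subtracting the polar part $g(0)/z$ (equivalently replacing $g$ by $g-g(0)e^{-t}$, which vanishes at the origin) so that the weighted norms remain finite on the lines with $\Re z<0$. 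The two available derivatives are exactly what this Cauchy--Schwarz argument consumes.
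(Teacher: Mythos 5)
Your continuation argument takes a genuinely different route from the paper's (which simply integrates by parts twice in the Mellin integral to get $z\calM g(z)=\tfrac1{z+1}\int_0^\infty s^{z+1}g''(s)\,ds$, an absolutely convergent formula valid in the whole strip at once), and the continuation itself is essentially workable. However, your proof of the bound \eqref{b5} has a genuine gap on the right part of the strip, $\tfrac12\le\sigma<\tfrac12+\eps'$. There you estimate $z\calM g=z\calM(H(h)f)-\tfrac{\pi z}{\sin\pi z}\calM f$ term by term; the exponential decay of $\pi/\sin(\pi z)$ in $\abs{\Im z}$ rescues the second term, but for the first term you invoke only square-integrability of $\calM(H(h)f)$ on vertical lines, and $\norm{F}_{L^2(d\tau)}<\infty$ does not control $\int\abs{\tau}\,\abs{F(\tau)}\,d\tau$ --- Cauchy--Schwarz against the weight $\abs{\tau}$ diverges. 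To absorb the factor $\abs{z}$ into an $L^1(d\tau)$ bound one needs roughly two derivatives of $H(h)f$ in weighted $L^2$, and here the approach actually breaks rather than being merely incomplete: since $h(t)\sim 1/t$ at the origin, the best available bound is $(H(h)f)''(t)=O(t^{-5/2})$ as $t\to0$, so the relevant integrals $\int_0^1 s^{\sigma+1}\abs{(H(h)f)''(s)}\,ds$ (and their $L^2$ analogues) cannot be controlled at $\sigma=\tfrac12$, which lies inside the region you must cover.

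The fix is already contained in your own observations. You correctly identify that $f\in\Ran H(h)$ upgrades the decay of $f$ to $O(t^{-1/2-\eps})$ at infinity, but you apply this upgrade only to $f$ and $H(h)f$. Applied to $g=H(\wt h)f$ together with \eqref{a9}, it gives $g^{(k)}(t)=O(t^{-k-1/2-\eps})$ as $t\to\infty$ for $k=0,1,2$ (not merely $O(t^{-k-1/2})$ as you state), while $g\in C^2([0,\infty))$ near the origin. With these improved bounds your ``left part'' argument --- two integrations by parts plus weighted Plancherel and Cauchy--Schwarz --- works verbatim on the \emph{entire} strip $-\tfrac12-\eps'<\sigma<\tfrac12+\eps'$, yielding both the meromorphic continuation (the single simple pole at $0$ with residue $g(0)$ coming from the prefactor $1/z$, exactly as in \eqref{b13}) and the estimate \eqref{b5} in one stroke; this is the paper's proof, and it makes the decomposition $g=H(h)f-H(1/t)f$, the Carleman diagonalisation, and the gluing of analytic continuations across $\Re z=\tfrac12$ all unnecessary. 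Two smaller points: the bound $\abs{\wt h'(u)}\le C/u$ is too weak for your Lipschitz estimate near $t=0$, since the majorant $C/s$ is not in $L^2(0,1)$ and Cauchy--Schwarz fails; use instead that $\wt h'$ is bounded near $0$ (from $\wt h\in C^2([0,\infty))$) and is $O(s^{-2})$ at infinity. And the matching of the two half-strip continuations along $\Re z=\tfrac12$ via $L^2$ boundary values, which you dismiss as ``analytic continuation'', is a Morera-type argument that would need to be spelled out.
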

\begin{proof}
Since  $f\in \Ran H(h)$, from \eqref{a8} and Cauchy-Schwarz we obtain 
\[
f(t)=\begin{cases}
O(t^{-\frac12}), & t\to0,
\\
O(t^{-\frac12-\eps}), & t\to\infty.
\end{cases}
\label{b11}
\]
Next, we have
$$
g^{(k)}(t)=\int_0^\infty \wt h^{(k)}(t+s)f(s)ds,\quad k=0,1,2.
$$
Combining \eqref{b11} with our assumptions on $\wt h$, we obtain that $g\in C^2([0,\infty))$ and 
\[
g^{(k)}(t)=O(t^{-k-\frac12-\eps}), \quad t\to\infty.
\label{b12}
\]

Further, 
for $\Re z=1/2$, integrating by parts twice, we get
\[
z\calM g(z)
=
-\int_0^\infty s^z g'(s)ds
=
\frac1{z+1}\int_0^\infty s^{z+1}g''(s)ds. 
\label{b13}
\]
The integrals here converge absolutely by the estimates \eqref{b12}; 
the boundary terms vanish by the same estimates. 
The same estimates also show that the right side in \eqref{b13} 
has an analytic extension into the strip \eqref{b4}.

Finally, again by \eqref{b12} we have
$$
\int_0^\infty (\abs{g''(s)}^2+\abs{s^{2+\eps'}g''(s)}^2)ds\leq C(\eps'), \quad 0<\eps'<\eps.
$$
By the Plancherel identity \eqref{b2} for the Mellin transform applied to $s^\alpha g''(s)$ with $0\leq \alpha\leq 2+\eps'$, 
we obtain that 
$\int_0^\infty s^{z+1}g''(s)ds$ is in $L^2$ on the vertical lines $\sigma+i\bbR$ 
with $-1/2-\eps'\leq\sigma\leq 1/2+\eps'$. 
Taking into account the factor $1/(z+1)$ in front of the integral in \eqref{b13}, by
Cauchy-Schwarz we 
arrive at the required bound \eqref{b5}.
\end{proof}

\begin{lemma}\label{lma.b5}
Let $h$ be as in Theorem~\ref{thm.a5}, and let $H(h)f=Ef$ for some 
$f\in L^2(\bbR_+)$ and some $0<E\leq\pi$.  
Then the Mellin transform $\calM f(z)$ extends to an analytic
function in the strip \eqref{b4}, satisfying the estimate 
\[
\int_{-\infty}^\infty \abs{ (\sigma+i\tau)\calM f(\sigma+i\tau)}d\tau\leq C(\eps'),
\quad
-\frac12-\eps'<\sigma<\frac12+\eps',
\label{b9}
\]
for every $0 < \eps' < \eps$.
\end{lemma}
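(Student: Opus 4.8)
The plan is to turn the eigenvalue equation into an explicit formula for $\calM f$ and then study its analytic structure. Writing $h(t)=1/t+\wt h(t)$ and using linearity of $H$ in its kernel, the equation $H(h)f=Ef$ becomes $H(1/t)f=Ef-g$ with $g=H(\wt h)f$; since $E\neq0$ we have $f=E^{-1}H(h)f\in\Ran H(h)$, so Lemma~\ref{lma.b4} applies to $g$. Applying the Mellin transform together with the diagonalisation \eqref{b3a} of the Carleman operator, on the line $\Re z=1/2$ I obtain $\frac{\pi}{\sin(\pi z)}\calM f(z)=E\calM f(z)-\calM g(z)$, which rearranges to
\[
\calM f(z)=m(z)\calM g(z),\qquad m(z)=\frac{\sin(\pi z)}{E\sin(\pi z)-\pi}.
\label{eq.planMf}
\]
Because $\calM g$ is meromorphic in the strip \eqref{b4} by Lemma~\ref{lma.b4}, the right-hand side of \eqref{eq.planMf} provides a meromorphic extension of $\calM f$; the entire task is to show that this extension is in fact analytic and obeys \eqref{b9}.

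Next I would locate the poles of $m\calM g$ in the strip. The factor $m$ has a simple zero at the origin, cancelling the (at most simple) pole of $\calM g$ there, so the product is regular at $z=0$. The remaining candidates are the zeros of $E\sin(\pi z)-\pi$, i.e.\ solutions of $\sin(\pi z)=\pi/E$. Separating real and imaginary parts and using that $0<E\leq\pi$ forces $\pi/E\geq1$, one checks that every such zero inside the strip lies on the critical line $\Re z=1/2$, namely at $z_0^\pm=\tfrac12\pm i\tau_0$ with $\cosh(\pi\tau_0)=\pi/E$ (these merge to $z=\tfrac12$ when $E=\pi$). This is precisely where the hypothesis $E\leq\pi$ enters: for $E>\pi$ the zeros would leave the critical line, consistent with genuine eigenvalues appearing above $\pi$.

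The crux, which I expect to be the main obstacle, is to show that these critical-line poles are removable. Here I would use that $\calM f$ is the boundary value on $\Re z=\tfrac12$ of an $L^2$ function (Plancherel, \eqref{b2}), whereas near $z_0^\pm$ the factor $m$ has a genuine pole --- simple when $E<\pi$ (since $\cos(\pi z_0^\pm)\neq0$) and double at $\tfrac12$ when $E=\pi$. If $\calM g$ failed to vanish to the matching order at $z_0^\pm$, then $\calM f(\tfrac12+i\tau)$ would blow up at least like $(\tau\mp\tau_0)^{-1}$ near $\tau=\pm\tau_0$, contradicting $\calM f\in L^2$. Hence $\calM g$ must vanish to the required order, the product $m\calM g$ has no pole at $z_0^\pm$, and $\calM f=m\calM g$ extends analytically to all of the strip \eqref{b4}.

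Finally, I would derive \eqref{b9} from \eqref{eq.planMf} and the bound \eqref{b5} on $\calM g$. Fix a small $\delta>0$; then $m$ is bounded, say by $M$, on the closed strip with the disks $D^\pm=\{\abs{z-z_0^\pm}<\delta\}$ removed, while the now-analytic function $\calM f$ is bounded, say by $B$, on $\overline{D^+}\cup\overline{D^-}$. Splitting each vertical line $\Re z=\sigma$ into its intersection with $D^+\cup D^-$ (of total length $O(\delta)$, where $\abs{z\calM f}\leq B\abs{z}$) and the complementary part (where $\abs{z\calM f}=\abs{m}\,\abs{z\calM g}\leq M\abs{z\calM g}$), and applying \eqref{b5} on the complement, gives the asserted bound \eqref{b9}, uniformly in $\sigma\in(-\tfrac12-\eps',\tfrac12+\eps')$.
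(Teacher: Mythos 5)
Your proposal is correct and follows essentially the same route as the paper: the same Mellin-transform rearrangement $\calM f=-\calM g/u$ with $u(z)=\pi/\sin(\pi z)-E$ (your $m$ equals $-1/u$), the same cancellation at $z=0$, the same identification of the zeros of $u$ on the line $\Re z=\tfrac12$, the same use of the $L^2$ Plancherel bound to rule out poles there, and the same derivation of \eqref{b9} from \eqref{b5}. Your treatment is in fact slightly more explicit than the paper's in two places — verifying $f=E^{-1}H(h)f\in\Ran H(h)$ so that Lemma~\ref{lma.b4} applies, and handling the neighbourhoods of the critical-line points in the final integral estimate — but these are refinements of detail, not a different argument.
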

\begin{proof}
Let us write the eigenvalue equation for $f$ as
$$
H(1/t)f-Ef=-H(\wt h)f.
$$
Applying the Mellin transform, letting $g=H(\wt h)f$,  and using \eqref{b3a}, we obtain the equation
\[
\calM f(z)=-\calM g(z)/u(z), \quad u(z)=\frac{\pi}{\sin(\pi z)}-E.
\label{b10}
\]
Initially, this formula is valid for $\Re z=1/2$, but Lemma~\ref{lma.b4} 
ensures that the right side has a meromorphic extension to the strip \eqref{b4}. 

Consider the poles of this extension. By Lemma~\ref{lma.b4}, there may be a pole 
at $z=0$; however, this pole is cancelled out by the pole of $u(z)$ at $z=0$. 
There may also be poles arising due to the zeros of $u(z)$. 
Inspecting these, we find that the only zeros of $u(z)$ in the strip \eqref{b4} are given by 
$$
z=\frac12\pm i\theta, \quad
\theta=\frac1\pi\log\left((\pi/E)-\sqrt{(\pi/E)^2-1}\right).
$$
If $E=\pi$, these two zeros coalesce into one double zero at $z=1/2$. 
However, by the Plancherel identity \eqref{b2} for the Mellin transform, 
the integral of $\abs{\calM f(z)}^2$ over the vertical line $\Re z=1/2$
must be finite. This shows that $\calM f(z)$ in fact cannot have poles on this line. 

Summarizing, we see that \eqref{b10} defines an \emph{analytic} extension of $\calM f$ 
into the strip \eqref{b4}. The estimate \eqref{b9} follows from the estimate \eqref{b5} 
and from the fact that $u(\sigma+i\tau)\to -E$ as $\abs{\tau}\to\infty$. 
\end{proof}

\subsection{Proof of Lemma~\ref{lma.b3}}
By  \eqref{a9}, we have
$$
h'(t)=O(t^{-2}),\quad t\to\infty.
$$
From here, applying the Cauchy-Schwarz inequality to \eqref{b1a}, we obtain 
$$
f'(t)=O(t^{-3/2}),\quad t\to\infty.
$$
Thus, it remains to inspect the behaviour of $f(t)$ and $f'(t)$ for small $t$. 

By the analyticity of $\calM f(z)$ in the strip \eqref{b4} and by the estimate \eqref{b9},
we can shift the contour
of integration in the Mellin inversion formula \eqref{b3} to $\Re z=\sigma$ for any $\sigma$ satisfying $-\frac12-\eps<\sigma<\frac12$. This gives us that
$$
f(s)=\frac1{2\pi i}\int_{\sigma-i\infty}^{\sigma+i\infty} s^{-z}\calM f(z) dz, 
\quad
f'(s)=-\frac1{2\pi i}\int_{\sigma-i\infty}^{\sigma+i\infty}zs^{-z-1}\calM f(z) dz.
$$
Again using the estimate \eqref{b9}, we see that 
$$
\abs{f(s)}\leq Cs^{-\sigma}, \quad \abs{f'(s)}\leq Cs^{-\sigma-1}, \quad s>0.
$$
It follows that $f(0)=0$ and that $f'\in L^2(0,1)$. 
\qed

\section{Proving Theorems~\ref{thm.a1} and \ref{thm.a2}}\label{sec.aaa}

\begin{proof}[Proof of Theorem~\ref{thm.a1}]
By Theorem~\ref{thm.a4}(i), it suffices to check that $H(h_0)|_{(\Ker H(h_0))^\perp}$ has a purely a.c. spectrum $[0,\pi]$ of multiplicity one. 
By the properties of the zeta function (see \eqref{a11}, \eqref{a12}) the function $h_0(t)-1/t$ is analytic near $t=0$ and satisfies
$$
h_0^{(k)}(t)=O(2^{-t}), \quad t\to\infty, \quad k=0,1,2. 
$$
Thus, $h_0$ satisfies the hypotheses of both Theorem~\ref{thm.a4a} and Theorem~\ref{thm.a5}. 
It follows that $H(h_0)$ has the a.c. spectrum $[0,\pi]$ of multiplicity one, no singular continuous spectrum, 
and no eigenvalues in $(0,\pi]$. It remains to rule out the eigenvalues in $(\pi,\infty)$. 
But by the estimate \eqref{d1} and by Lemma~\ref{lma.d1}, we have $\norm{H(h_0)}\leq\pi$, and so 
$H(h_0)$ has no eigenvalues in $(\pi,\infty)$. 
\end{proof}

\begin{proof}[Proof of Theorem~\ref{thm.a2}]

1) 
Theorem~\ref{thm.a4}(ii) reduces the question to analysing the spectrum of 
$H(h_a)|_{(\Ker H(h_a))^\perp}$. 
As in the proof of Theorem~\ref{thm.a1}, by the properties of the zeta function 
it is straightforward to check that the kernel function $h_a$ 
satisfies the hypotheses of both  Theorem~\ref{thm.a4a} and Theorem~\ref{thm.a5}. 
It follows that $H(h_a)$ has the a.c. spectrum $[0,\pi]$ of multiplicity one, no singular continuous spectrum, 
and no eigenvalues in $(0,\pi]$.
So it remains to analyse the eigenvalues of $M(g_a)$ (which coincide with those of $H(h_a)$) 
in the interval $(\pi,\infty)$.

2) For $E\geq\pi$, let us denote by $N((E,\infty);M(g_a))$ the total number of eigenvalues 
of $M(g_a)$ in the interval $(E,\infty)$, counting multiplicity. 
Let us prove that $N((E,\infty);M(g_a))$ is non-increasing in $a>0$. 
It suffices to prove that $M(g_a)$ is monotone non-increasing in the standard quadratic form 
sense, i.e. 
\[
(M(g_a)x,x)\leq (M(g_b)x,x), \quad 0<b<a, \quad x=\{x_n\}_{n=1}^\infty\in \ell^2(\bbN).
\label{a5a}
\]
By the calculation in the proof of  Theorem~\ref{thm.a4}, we have that
$$
(M(g_a)x,x)=(\calN_a^* x,\calN_a^* x)=\int_0^\infty e^{-at}\Abs{f(t)}^2dt, 
\quad f(t)=\sum_{n=1}^\infty x_n n^{-\frac12-t}. 
$$
The required monotonicity \eqref{a5a} obviously follows from this representation. 
 
3)
Let us prove that $M(g_a)$ is continuous in $a>0$ in the operator norm. 
Taking $0<b<a$, we have, as in the previous step, and with $f(t)$ as above,
\begin{multline*}
((M(g_b)-M(g_a))x,x)=\int_0^\infty (e^{-bt}-e^{-at})\abs{f(t)}^2 dt
\\
=\int_0^\infty (1-e^{-(a-b)t})e^{-bt}\abs{f(t)}^2 dt
\leq
\sup_{t>0}e^{-bt/2}(1-e^{-(a-b)t})\norm{M(g_{b/2})x}^2.
\end{multline*}
This supremum tends to zero as $a\to b$, and we therefore obtain the desired claim.

4) 
By \eqref{d1}, we have
$$
h_a(t)=e^{-at/2}\sum_{n=1}^\infty n^{-t-1}
\leq
e^{-at/2}(1+1/t)=\frac{e^{-at/2}}{t}(1+t)\leq \frac{e^{-at/2}e^t}{t},
$$
and so for $a\geq2$ we have $h_a(t)\leq 1/t$. Thus, by Lemma~\ref{lma.d1}, 
$H(h_a)$ has norm less or equal to $\pi$ for $a\geq2$, and hence the same is true for $M(g_a)$.
Thus, for $a\geq2$ the operator $M(g_a)$ has no eigenvalues in $(\pi,\infty)$. 

5) 
Let us check that for any $a>0$, the operator $H(h_a)$ has at most one eigenvalue in $(\pi,\infty)$. 
Let
$$
h_a(t)=e^{-at/2}+w_a(t), \quad 
w_a(t)=e^{-at/2}\sum_{n=2}^\infty n^{-t-1}, \quad t>0.
$$
By \eqref{d1}, we have $w_a(t)\leq 1/t$, 
and therefore,  by Lemma~\ref{lma.d1}, $H(w_a)$ has no eigenvalues in $(\pi,\infty)$.  
On the other hand, the Hankel operator corresponding to the kernel function $e^{-at/2}$
is the rank one operator with the integral kernel $e^{-a(t+s)/2}$, which we naturally denote 
by $(\cdot,e^{-at/2})e^{-at/2}$. 
It has the single non-zero eigenvalue $1/a$. Note that
$$
H(h_a)=(\cdot,e^{-at/2})e^{-at/2}+H(w_a). 
$$
From here our claim follows by a standard argument in perturbation theory.
Indeed, for $E\geq\pi$, let us write the min-max principle in the form
\[
N((E,\infty);H(h_a))
=
\sup\{\dim L: (H(h_a)f,f)>E\norm{f}^2 \quad \forall f\in L \setminus \{0\}\},
\label{a6a}
\]
where the supremum is taken over all subspaces $L$ with the indicated property. 
We claim that 
\[
N((\pi,\infty);H(h_a))\leq1.
\label{a6b}
\] 
Assume that this  is false and take a subspace
$L$ as in \eqref{a6a} with $\dim L\geq2$. 
Then there is a non-zero element $f\in L\cap\{e^{-at/2}\}^\perp$ which satisfies
$$
(H(h_a)f,f)=(H(w_a)f,f)\leq\pi\norm{f}^2, 
$$
in contradiction with the inequality in \eqref{a6a}.

6) 
Next, let us check for $0<a<1/\pi$ that the operator $H(h_a)$ has 
at least one eigenvalue in $(\pi,\infty)$.  
We claim that 
\[
 N((\pi,\infty);H(h_a))\geq1, \quad a<1/\pi.
\label{a6c}
\] 
To prove  \eqref{a6c}, 
let $f=e^{-at/2}$. 
Observe that $H(w_a)\geq0$; this follows by the same argument as in the proof of Theorem~\ref{thm.a4}.
Thus,
$$
(H(h_a)f,f) \geq (f,e^{-at/2})(e^{-at/2},f)=1/a\norm{f}^2>\pi\norm{f}^2,
$$
and so \eqref{a6c} holds by choosing $L=\Span\{e^{-at/2}\}$ in the min-max principle. 

7)
Let us put together the above steps. 
By \eqref{a6b} and \eqref{a6c}, $N((\pi,\infty);M(g_a))=1$ for $0<a<1/\pi$,
and, by step 4),
$N((\pi,\infty);M(g_a))=0$ for $a\geq2$. By the monotonicity in step 2), there must exist
a critical $a_*>0$ such that 
$$
N((\pi,\infty); M(g_a))=
\begin{cases}
0 & a>a_*,
\\
1 & a<a_*. 
\end{cases}
$$
The norm continuity of $M(g_a)$ ensures that $N((\pi,\infty); M(g_a))$ is lower semi-continuous in $a$, and so 
$N((\pi,\infty); M(g_{a_*}))=0$. 

Thus, we have exactly one eigenvalue $\lambda(a)$ for $0<a<a_*$ and no eigenvalues for $a\geq a_*$. 
The norm continuity and monotonicity of $M(g_a)$ ensures that $\lambda(a)$ is a continuous monotone function of $a$ with
$\lambda(a) \to \pi$ as $a \to a_*^{-}$.
\end{proof}

The above argument also gives the upper and lower bounds $1/\pi\leq a_*\leq 2$ for the critical 
value of $a$. 
It also shows that the eigenvalue $\lambda(a)$ satisfies $1/a<\lambda(a)\leq \pi+1/a$.
Indeed, the argument in step 6) gives that
$$
N((1/a,\infty); H(h_a))\geq1, 
$$
i.e. $\lambda(a)>1/a$. 
On the other hand, 
the norm of $H(h_a)$ satisfies
$$
\norm{H(h_a)}\leq \norm{H(\calL\nu_a)}+\norm{(\cdot,e^{-at/2})e^{-at/2}}=\pi+1/a,
$$
yielding the upper bound for $\lambda(a)$.

\end{document}